\numberwithin{equation}{section}
\newcommand{\be}{\begin{equation}}
\newcommand{\ee}{\end{equation}}
\newcommand{\ba}{\begin{eqnarray}}
\newcommand{\ea}{\end{eqnarray}}
\newtheorem{theorem}{Theorem}[section]
\newtheorem{proposition}[theorem]{Proposition}
\newtheorem{remark}[theorem]{Remark}
\newtheorem{lemma}[theorem]{Lemma}
\begin{document}

\title [Controllability for the Keller-Segel system]{A uniform controllability result for the Keller-Segel system} 

\author[F. W. Chaves-Silva]{Felipe Wallison Chaves-Silva$^*$}
\address{BCAM-Basque Center for Applied Mathematics
Mazarredo 14, 48009 Bilbao, Basque Country, Spain}
\email{felipewallison@gmail.com}
\author[S. Guerrero]{Sergio Guerrero}
\address{Universit\'e Pierre et Marie Curie-Paris 6, UMR 7598 Laboratoire Jacques-Louis Lions, Paris,
F-75005 France}
\email{guerrero@ann.jussieu.fr}

\thanks{$^*$ Corresponding author}

\keywords{Keller-Segel system; Controllability to trajectories; Carleman estimates}

\subjclass[2010]{Primary: 93B05, 93B07; Secondary: 35K65, 93C20 .}

\maketitle
\begin{abstract} 
In this paper we study the controllability of the Keller-Segel system approximating its parabolic-elliptic version. We show that this parabolic system is locally uniform controllable around a constant solution of the parabolic-elliptic system when the control is acting on the component of the chemical.

\vspace{0.5cm}

\noindent\textsc{R\'esum\'e.} Dans cet article, nous \'etudions la contr\^olabilit\'e du syst\`eme de Keller-Segel qui approxime  sa version parabolique-elliptique. Nous montrons que ce syst\`eme parabolique est localement uniform\'ement contr\^olable  autour d'une solution constante du syst\`eme parabolique-elliptique lorsque le contr\^ole agit sur la substance chimique.

\end{abstract}

%%%%%%%%%%%%%%%%%%%%%%%%%%%%%%%%%%%%%%%%%%%%%%%%%%%%%%%%%%%%%%%%%%%%%%%%%%%%%%%%%%%%%%%%%%%
%%%%%%%%%%%%%%%%%%%%%%%%%%%%%%%%%%%%%%%%%%%%%%%%%%%%%%%%%%%%%%%%%%%%%%%%%%%%%%%%%%%%%%%%%%%%%%%%%%%
%%%%%%%%%%%%%%%%%%%%%%%%%%%%%%%%%%%%%%%%%%%%%%%%%%%%%%%%%%%%%%%%%%%%%%%%%%%%%%%%%%%%%%%%%%%%%%%%%%%
%%%%%%%%%%%%%%%%%%%%%%%%%%%%%%%%%%%%%%%%%%%%%%%%%%%%%%%%%%%%%%%%%%%%%%%%%%%%%%%%%%%%%%%%%%%%%%%%%%%%%

\section{Introduction}
Let  $\Omega \subset \mathbb{R}^N$ (N = 2, 3) be a bounded connected open set whose boundary $\partial \Omega$ is regular enough. Let $T > 0$ and  $\omega'$ and $\omega$  be two (small) nonempty subsets of $\Omega$ with $\omega' \subset \subset \omega$. We will use the notation $Q =  \Omega \times (0,T) $ and  $\Sigma = \partial \Omega \times (0,T)$ and we will denote by $\nu(x)$ the outward normal to $\Omega$ at the point $x \in \partial \Omega$.

We will be concerned with the following controlled Keller-Segel system
\begin{equation}\label{system}
\left |   
\begin{array}{ll}
u_{t}  - \Delta u  = -\nabla \cdot (u\nabla v)   &     \mbox{in}  \  \ Q,  \\
\epsilon v_t - \Delta v = au - bv  + g \chi  &     \mbox{in}  \  \ Q, \\
\frac{\partial u}{\partial \nu} = \frac{\partial v}{\partial \nu} = 0     &    \mbox{on}  \  \   \Sigma, \\
u(x,0) = u_0;  \ v(x,0) = v_0   &    \mbox{in}    \  \  \Omega,
\end{array}
\right. 
\end{equation}
where $a$ and $b$  are positive real constants, $u_0, v_0 \geq 0$ are the  initial data,  $g$ is  an internal control and $\epsilon$ is a small positive parameter, which is intended to tend to zero.  In \eqref{system}, $\chi : \mathbb{R}^N \rightarrow \mathbb{R}$ is a $C^\infty$  function  such that $supp \  \chi \subset \subset \omega$, $0 \leq \chi \leq 1$ and $\chi \equiv 1$ in $\omega'$.

System \eqref{system} is a classical equation in chemotaxis, describing the change of motion when a population reacts in response  to an external chemical stimulus spread in the environment where they reside.  In many applications (see, for instance, \cite{Biler-5, LR, AR}), system \eqref{system}  is  approximated by the following parabolic-elliptic system:

\begin{equation}\label{system0}
\left |   
\begin{array}{ll}
u_{t}  - \Delta u  = -\nabla \cdot (u\nabla v)  &     \mbox{in}  \  \ Q,  \\
- \Delta v = au -bv  + g \chi &     \mbox{in}  \  \ Q, \\
\frac{\partial u}{\partial \nu} = \frac{\partial v}{\partial \nu} = 0     &    \mbox{on}  \  \   \Sigma, \\
u(x,0) = u_0    &    \mbox{in}    \  \  \Omega.
\end{array}
\right. 
\end{equation}

In \eqref{system} and \eqref{system0},  $u = u(x,t) \geq 0$ and $v = v(x,t) \geq 0$ represent, respectively,  the concentrations of species (i.e, the population density) and that of the chemical (i.e., concentration of the chemical substance). For more details about the Keller-Segel system see, for instance, \cite{Biler, Carrillo-1, Horstman, Hillen-Painter, Keller-Segel, Rascle-Ziti, Wang}.

The goal of this paper is to analyze the controllability of   \eqref{system} around a fixed trajectory of \eqref{system0}, uniformly with respect to $\epsilon$. More precisely, we consider a  constant solution  $(M_1 ,M_2) \in \mathbb{R}^2 $  of \eqref{system0}, with $g\equiv 0$ , and we seek for a control $g =g(\epsilon)$  such that $(u(T),v(T)) = (M_1,M_2)$ and $g$ is bounded with respect to $\epsilon$.

\begin{remark}
 Each one of the models \eqref{system} and \eqref{system0} can be viewed as a single nonlinear parabolic equation for $u$ with a \textit{nonlocal} (either in $x$ or $(x,t$)) nonlinearity, since the term $\nabla v$ can be expressed as a linear integral operator acting on $u$. In the first model, the variations of the concentration $v$ are governed by the linear nonhomogeneous heat equation, and therefore are slower than in the latter system, where the response of $v$ to the variations of $u$ are instantaneous, and described by the integral operator $(-\Delta)^{-1}$ whose kernel has a singularity. Thus, one may expect the evolution described by (\ref{system0}) to be faster than in (\ref{system}), especially for large values of $\epsilon$ when the diffusion of $v$ is rather slow compared to that of $u$. Moreover, the nonlinear effects for (\ref{system0}) should manifest themselves faster than for (\ref{system}) (see \cite{Biler-5}).
 \end{remark}

As usual in control theory, we study the controllability of  \eqref{system} around $(M_1,M_2)$ by first analyzing the controllability of its linearization around this trajectory, namely:
\begin{equation}\label{system-linear}
\left |   
\begin{array}{ll}
u_{t}  - \Delta u  = -M_1\Delta v  + h_1 &     \mbox{in}  \  \ Q,  \\
\epsilon v_t - \Delta v = au - bv  + g \chi +h_2  &     \mbox{in}  \  \ Q, \\
\frac{\partial u}{\partial \nu} = \frac{\partial v}{\partial \nu} = 0     &    \mbox{on}  \  \   \Sigma, \\
u(x,0) = u_0 ;  \ v(x,0) = v_0  &    \mbox{in}    \  \  \Omega,
\end{array}
\right. 
\end{equation}
where $h_1 $ and $h_2$ are  given exterior forces belonging  to an appropriate Banach space $X$ (see \eqref{X}) and  having exponential decay at $t = T$.  Our objective then will be to prove that we can find $g$ so that the solution $(u,v)$ of (\ref{system-linear}) satisfies $(u(T),v(T)) = (0,0)$ and moreover we want that the quantity $\nabla \cdot (u\nabla v)$ belongs to  $X$. Then, we employ an inverse mapping argument introduced in \cite{Ima-1} in order  to obtain the controllability of \eqref{system} around $(M_1,M_2)$.

The most important tool to prove the null controllability of the linear system \eqref{system-linear} is a \textit{ global Carleman inequality} for the solutions of  its  adjoint system, that is to say,
\begin{equation}\label{system-linear-adjoint}
\left |   
\begin{array}{ll}
-\varphi_{t}  - \Delta \varphi  = a \xi  +f_1 &     \mbox{in}  \  \ Q,  \\
-\epsilon \xi_t - \Delta \xi =  -b\xi -M_1\Delta \varphi +f_2  &     \mbox{in}  \  \ Q, \\
\frac{\partial \varphi}{\partial \nu} = \frac{\partial \xi}{\partial \nu} = 0     &    \mbox{on}  \  \   \Sigma, \\
\varphi(x,T) = \varphi_T ;  \ \xi(x,T) = \xi_T  &    \mbox{in}    \  \  \Omega, \\
\int_\Omega \varphi_T dx=0,
\end{array}
\right. 
\end{equation}
where $f_1$ and $f_2$ are arbitrary $L^2(Q)$ functions.

Actually, due to the fact that the control is acting on the second equation of \eqref{system-linear}, we need to bound global integrals of $\varphi$ and $\xi$ in terms of a local integral of $\xi$ and global integrals of $f_1$ and $f_2$.  The main difficulty when proving a Carleman inequality of this type for the solution  $(\varphi, \xi)$ of  \eqref{system-linear-adjoint} comes from the fact that the coupling in the second equation is in $\Delta \varphi$ and not in $\varphi$. In fact, the inequality we prove  will contain global terms with the $L^2$-weighted norms of $\Delta \varphi$ and $\xi$ in the left hand side, no global terms in $\varphi$, while a local integral of $\xi$ and global integrals of $f_1$ and $f_2$ will appear in its right-hand side.  

With the help of the Carleman inequality and an appropriate inverse function theorem, we will prove the following result, which is the main result of this paper.
\begin{theorem}\label{mainresultt}
Let $0 < \epsilon \leq 1$ and $(M_1,M_2) \in \mathbb{R}^2_+$ be such that $aM_1 -bM_2 =0$. Then, there exists $\gamma >0$ such that, for any $(u_0,v_0) \in H^1(\Omega) \times H^2(\Omega)$ with  $u_0, v_0 \geq 0$, satisfying $\frac{1}{|\Omega|}\int_\Omega u_0dx =M_1$, $\frac{\partial v_0}{\partial \nu} = 0$ on $\partial \Omega $ and $||(u_0-M_1, v_0-M_2)||_{H^1(\Omega) \times H^2(\Omega)} \leq \gamma$, we can find $g \in L^2(0,T;H^1(\Omega))$, with $\left \| g \right \|_{L^2(0,T;H^1(\Omega))}$ bounded independently of $\epsilon$,  such that the associated solution $(u,v)$ to \eqref{system} satisfies: 
$$
(u(T),v(T)) = (M_1,M_2) \ \text{in} \ \Omega.
$$
\end{theorem}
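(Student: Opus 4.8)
The plan is to follow the by-now classical linearization-plus-fixed-point strategy for local controllability of nonlinear parabolic systems, carrying the parameter $\epsilon$ explicitly through every estimate so that all constants remain bounded as $\epsilon\to 0$.

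First I would translate the problem to the origin. Writing $\bar u = u - M_1$ and $\bar v = v - M_2$ and using the compatibility relation $aM_1 - bM_2 = 0$, the system \eqref{system} becomes
$$\bar u_t - \Delta\bar u + M_1\Delta\bar v = -\nabla\cdot(\bar u\nabla\bar v),\qquad \epsilon\bar v_t - \Delta\bar v = a\bar u - b\bar v + g\chi,$$
with homogeneous Neumann data, initial data $(u_0 - M_1, v_0 - M_2)$, and target $(\bar u(T),\bar v(T)) = (0,0)$. Integrating the first equation over $\Omega$ shows that $\int_\Omega\bar u(t)\,dx$ is conserved, and the hypothesis $\frac{1}{|\Omega|}\int_\Omega u_0\,dx = M_1$ makes it vanish; this is exactly the solvability constraint reflected by $\int_\Omega\varphi_T\,dx = 0$ in the adjoint system \eqref{system-linear-adjoint}. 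The nonlinear term $-\nabla\cdot(\bar u\nabla\bar v)$ is then treated as the source $h_1$ in the linearized system \eqref{system-linear}.

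The heart of the argument is the global Carleman inequality for the adjoint system \eqref{system-linear-adjoint}. I would fix a Fursikov--Imanuvilov weight $\alpha(x,t)$, singular at $t=0,T$ and built from an auxiliary function $\eta$ with $\nabla\eta\neq 0$ off $\omega'$ and adapted to the Neumann condition on $\partial\Omega$, set $\rho = e^{s\alpha}$, and apply the standard one-equation Carleman estimate separately to the $\varphi$-equation and the $\xi$-equation. The delicate point, and the one I expect to be the main obstacle, is the coupling through $\Delta\varphi$ rather than $\varphi$. I would exploit the second adjoint equation in the reverse direction, $M_1\Delta\varphi = \epsilon\xi_t + \Delta\xi - b\xi + f_2$, so that a global weighted bound on $\Delta\varphi$ is produced directly from weighted bounds on $\xi$, $\Delta\xi$, $\xi_t$ and $f_2$; this is why no global term in $\varphi$ appears and why the left-hand side carries only $\Delta\varphi$ and $\xi$. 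The local integral of $\varphi$ arising from the $\varphi$-estimate must then be absorbed: using the first equation $a\xi = -\varphi_t - \Delta\varphi - f_1$ together with local energy and elliptic estimates, it is replaced by a local integral of $\xi$ plus small multiples of the global terms, which are swallowed for $s,\lambda$ large. Throughout, the term $\epsilon\xi_t$ must be controlled by an energy/Carleman estimate for $\xi_t$ (obtained by differentiating the equation in $t$) in a way that keeps the powers of $\epsilon$ favorable; since $0<\epsilon\le 1$, this is precisely what secures uniformity in $\epsilon$.

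With the Carleman inequality in hand, the remaining steps are more standard. By the usual duality argument---minimizing a coercive quadratic functional over adjoint states in the weighted Hilbert space dictated by the Carleman weights---I would obtain, for source terms $h_1,h_2$ in the weighted space $X$ of \eqref{X}, a control $g$ driving \eqref{system-linear} to $(0,0)$ at time $T$, with $(u,v,g)$ and, crucially, the quadratic quantity $\nabla\cdot(u\nabla v)$ again lying in $X$, all with norms bounded uniformly in $\epsilon$. Finally I would set up the nonlinear map sending a triple $(\bar u,\bar v,g)$ to the pair of residuals of the two state equations together with the initial-data mismatches $\bar u(0)-\bar u_0$ and $\bar v(0)-\bar v_0$, defined on these weighted spaces, check that it is $C^1$ with surjective derivative at the origin (precisely the linear controllability just established), and invoke the inverse mapping theorem of \cite{Ima-1}: for $\|(u_0-M_1,v_0-M_2)\|_{H^1(\Omega)\times H^2(\Omega)}\le\gamma$ small enough, the map has a zero, yielding the required control with $\|g\|_{L^2(0,T;H^1(\Omega))}$ bounded independently of $\epsilon$ and $(u(T),v(T)) = (M_1,M_2)$.
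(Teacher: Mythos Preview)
Your overall architecture---translate to the origin, establish a Carleman inequality for \eqref{system-linear-adjoint}, obtain null controllability of the linear system in weighted spaces by duality, and close with the inverse mapping theorem---matches Sections~\ref{sec3}--\ref{sec4} of the paper. The gap is in the Carleman step. You propose to run the single-equation Carleman on $\varphi$ and on $\xi$ and then to manufacture the global bound on $\Delta\varphi$ from the pointwise identity $M_1\Delta\varphi=\epsilon\xi_t+\Delta\xi-b\xi+f_2$. This does not close in $s$: the source term in the $\xi$-Carleman at level $\beta$ contributes $s^{\beta}\iint_Q e^{2s\alpha}\phi^{\beta}|\Delta\varphi|^2$ on the right, and after substituting the identity this becomes $s^{\beta}\iint_Q e^{2s\alpha}\phi^{\beta}(\epsilon^2|\xi_t|^2+|\Delta\xi|^2+\dots)$, whereas the left-hand side $I_\beta$ of \eqref{ineq-epsxi} carries $\epsilon^2|\xi_t|^2+|\Delta\xi|^2$ only with the smaller weight $s^{\beta-1}\phi^{\beta-1}$. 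Since $s\phi$ is large, absorption fails, and no shift of $\beta$ repairs both ends simultaneously. Your secondary idea, to remove a local $\varphi$-integral through $a\xi=-\varphi_t-\Delta\varphi-f_1$, goes the wrong way: that relation expresses $\xi$ in terms of $\varphi$, not $\varphi$ in terms of $\xi$.

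The paper avoids this by applying a Carleman estimate not to $\varphi$ but directly to $\Delta\varphi$, which solves the Neumann heat equation \eqref{Deltavarphi} with source $\Delta(a\xi+f_1)$; because the source is itself a Laplacian, the transposition-type estimate of Lemma~\ref{transpocart} yields $s^3\iint_Q e^{2s\alpha}\phi^3|\Delta\varphi|^2$ on the left with only $s^4\iint_Q e^{2s\alpha}\phi^4(|\xi|^2+|f_1|^2)$ and a \emph{local} integral of $\Delta\varphi$ on the right. Adding the $\xi$-Carleman with $\beta=1$ then closes globally (the $s\iint_Q e^{2s\alpha}\phi|\Delta\varphi|^2$ coming from the $\xi$-source is swallowed by $s^3\phi^3|\Delta\varphi|^2$), and it is this local $\Delta\varphi$---not a local $\varphi$---that must be eliminated via the second adjoint equation. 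That elimination is itself the delicate point: it requires the decomposition $\theta\Delta\varphi=\eta+\psi$ of \eqref{eta}--\eqref{psi} together with Lemmas~\ref{estimate-1} and~\ref{lemmaenerg-1} so that the resulting $\Delta\xi$ and $\epsilon\xi_t$ terms carry powers of $s$ compatible with $I_1$. Incidentally, no time-differentiation of the $\xi$-equation is needed for uniformity in $\epsilon$: the factor $\epsilon^2|\xi_t|^2$ is already built into $I_\beta$ via Lemma~\ref{lemma-4} with $\sigma=\epsilon$.
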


\begin{remark}
Note that all constant trajectories $(M_1,M_2) \in \mathbb{R}^2_+$ of \eqref{system} satisfy  $aM_1 -bM_2 =0$.  On the other hand,  condition  $\frac{1}{|\Omega|}\int_\Omega u_0dx =M_1$ in Theorem \ref{mainresultt} is necessary since the  mass of $u$ is preserved, i.e., 
\begin{equation}\label{massa}
\frac{1}{|\Omega|} \int_{\Omega}u(x,t)dx = \frac{1}{|\Omega|} \int_{\Omega}u_0(x)dx,    \ \  \forall  t>0.
 \end{equation}
\end{remark}

Let us now mention some works that have been devoted to the study of the controllability of degenerating coupled parabolic systems.  

To our knowledge, the first time that the study of the controllability of coupled parabolic systems degenerating into  parabolic-elliptic ones was analyzed was in \cite{F-M} and \cite{F-MB}, where the authors analyze the local null controllability of a nonlinear coupled parabolic system approximating a parabolic-elliptic system modeling electrical activities in a cardiac tissue. Combining Carleman inequalities and weighted energy estimates, the authors prove the stability of the control properties with respect to the degenerating parameter.

Another related work  is \cite{CGP}, where the authors analyze the null controllability of degenerating coupled parabolic systems with zero-order couplings. In there, by extending the adjoint system to a system of four equations, the authors are able show that, in general, the control properties are preserved in the limit when the degenerating parameter goes to zero.

Concerning the controllabity of the Keller-Segel system, the only result we know is the one obtained in \cite{BGG-G}, where the authors analyze the controllability of the Keller system \eqref{system}, with $\epsilon =1$, around a fixed trajectory of \eqref{system} (i.e., a solution of  \eqref{system} with $g\equiv 0$), when a control is acting on the first equation, which is not natural from the physical point of view. The authors are able to show that the Keller-Segel system is controllable around this trajectory if the trajectory has good regularity properties.  However, in their case, since the control is acting on the first equation, the problem is easier from a mathematical point view because the adjoint system of the linearization of the Keller-Segel system around the trajectory  has a zero-order coupling (see \cite{Gue-1}). Another interesting work in this subject is \cite{F-L-P}, in which the authors show that, in dimension 2, any global in time bounded solution of  system \eqref{system} converges to a single equilibrium (a stationary solution of \eqref{system})  as  the time  tends to infinity.

The paper is organized as follows: In section \ref{sec2}, we prove a Carleman inequality for the system \eqref{system-linear-adjoint}. In section \ref{sec3}, we deal with the null controllability of the linearized system \eqref{system-linear}. Finally, in section \ref{sec4}, we prove the local uniform controllability of \eqref{system} around the constant trajectory $(M_1,M_2)$.

\section{Carleman inequality}\label{sec2}
In this section we prove a suitable Carleman inequality for  the adjoint system \eqref{system-linear-adjoint}. This will provide a null controllability result for the linear system \eqref{system-linear} with an appropriate $h_1$ (see section \ref{sec3}).

Before stating the desired Carleman inequality, let us introduce several weight functions which will be useful in the sequel.  The basic weight  will be a function $\eta_0 \in C^2(\overline{\Omega})$ verifying 
$$ \eta_0(x) > 0 \ \mbox{in} \  \Omega , \  \  \eta_0 \equiv 0 \  \mbox{on} \ \partial \Omega, \   \  |\nabla \eta_0(x)| > 0 \ \forall x \in \overline{\Omega \backslash \omega_0},$$
where $\omega_0 \subset \subset \omega'$ is a nonempty open set. The existence of such a function $ \eta_0$ is proved in \cite{F-Im}.
Then, for some positive real number  $\lambda$, we introduce:
\begin{align} \label{weightfunctions1}
&\phi(x,t) = \frac{e^{\lambda \eta_0(x)}}{t^4(T-t)^4}, \ \alpha(x,t) = \frac{e^{\lambda \eta^0(x)} - e^{2\lambda||\eta_0||_{\infty}}}{t^4(T-t)^4}, \nonumber \\
&\hat{\phi}(t) = \min_{x \in \overline{\Omega}} \phi(x,t), \ \phi^*(t) = \max_{x \in \overline{\Omega}} \phi(x,t), \ \alpha^*(t) = \max_{x \in \overline{\Omega}}\alpha (x,t),\ \hat{\alpha} = \min_{x \in \overline{\Omega}}\alpha (x,t).
\end{align}
Recall that weights like $\alpha$, $\phi$, etc.  were already used in \cite{Ima-1}  in order to obtain  Carleman inequalities for the (adjoint) Stokes system (see also \cite{FC-G-P}).

Let us also introduce  the following  notation:
\begin{align}\label{ineq-epsxi}
I_{\beta}(s,\sigma; q) = & \ s^{\beta+3} \iint\limits_Q e^{2s\alpha}\phi^{\beta+3}|q|^2 dxdt + s^{\beta+1} \iint\limits_Q e^{2s\alpha}\phi^{\beta+1}|\nabla q|^2 dxdt  \nonumber \\
& + s^{\beta-1} \iint\limits_Q e^{2s\alpha}\phi^{\beta-1}(\sigma^2|q_t|^2 +\sum_{i,j=1}^N |\frac{\partial^2q }{\partial x_i \partial x_j} |^2) dxdt,
\end{align}
where $s, \beta$ and $ \sigma$ are real numbers and $q = q(x,t)$.

The following Carleman inequality holds:
\begin{lemma}\label{lemma-4}
There exist $C = C(\Omega, \omega')$ and  $\lambda_0 = \lambda_0(\Omega, \omega')$ such that,  for every $\lambda \geq \lambda_0$, there exists $s_0 = s_0(\Omega, \omega', \lambda)$ such that, for any $s \geq s_0(T^4 + T^8)$, any $q_0 \in L^2(\Omega)$ and any  $f\in L^2(\Omega)$, the weak solution to 
\begin{equation}\label{heat-neumann}
\left |   
\begin{array}{ll}
\sigma q_{t}  - \Delta q  = f  &     \mbox{in}  \  \ Q,  \\
\frac{\partial q}{\partial \nu} = 0     &    \mbox{on}  \  \   \Sigma, \\
q(x,0) = q_0&    \mbox{in}    \  \  \Omega,
\end{array}
\right. 
\end{equation}
satisfies 
\begin{align}
I_{\beta}(s,\sigma; q) & \leq C\biggl(s^\beta \iint\limits_Q e^{2s\alpha}\phi^{\beta}|f|^2 dxdt  \nonumber + s^{\beta+3} \iint\limits_{\omega' \times (0,T)} e^{2s\alpha}{\phi}^{\beta+3}|q|^2 dxdt \biggl),
\end{align}
for all $\beta \in \mathbb{R}$ and any $0 < \sigma \leq 1$.
\end{lemma}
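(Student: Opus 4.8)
The plan is to run the classical Fursikov--Imanuvilov method for the scalar operator $\sigma\partial_t-\Delta$ with Neumann boundary conditions, while keeping the parameter $\sigma$ explicit throughout; the whole point is to arrange the computation so that the constant stays \emph{uniform} as $\sigma\to 0$. A tempting shortcut --- rescaling time by $\tau=t/\sigma$ to turn $\sigma\partial_t-\Delta$ into the standard heat operator --- must be avoided, since it would replace $T$ by $T/\sigma$ and hence make the threshold $s_0(T^4+T^8)$ depend on $\sigma$. By a standard density argument it suffices to prove the estimate for smooth solutions. First I would conjugate with the weight $\rho=e^{s\alpha}$, setting $\psi=e^{s\alpha}q$, and derive the equation for $\psi$; recalling that $\nabla\alpha=\nabla\phi=\lambda\phi\nabla\eta_0$, one obtains $\sigma\psi_t-\Delta\psi+2s\nabla\alpha\cdot\nabla\psi+s\Delta\alpha\,\psi-s^2|\nabla\alpha|^2\psi-\sigma s\alpha_t\psi=e^{s\alpha}f=:g$.

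Next I would split the left-hand side as $L_1\psi+L_2\psi=g$, placing the skew part (in particular the full time derivative $\sigma\psi_t$ and the gradient term $2s\nabla\alpha\cdot\nabla\psi$) in $L_1$ and the self-adjoint part ($-\Delta\psi$ and the zero-order weight terms) in $L_2$, and then use $2(L_1\psi,L_2\psi)_{L^2(Q)}\le\|g\|^2_{L^2(Q)}$. The core computation is to expand $(L_1\psi,L_2\psi)_{L^2(Q)}$ by integrating by parts in $x$ and $t$. Choosing $\lambda\ge\lambda_0$ large enough, the leading terms produce the expected positive quantities $s^3\lambda^4\iint_Q\phi^3|\nabla\eta_0|^4|\psi|^2\,dxdt$ and $s\lambda^2\iint_Q\phi|\nabla\eta_0|^2|\nabla\psi|^2\,dxdt$; the crucial point is the treatment of the terms carrying $\sigma\psi_t$. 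Upon integration by parts in time these yield factors such as $\sigma\,\partial_t(|\nabla\alpha|^2)$ and $\sigma\,\partial_t(\Delta\alpha)$, together with the explicit zero-order term $\sigma s\alpha_t\psi$; using the standard weight bounds $|\partial_t\phi|\le CT\phi^{5/4}$ and $|\partial_t^2\phi|\le CT^2\phi^{3/2}$, and the fact that $\sigma\le1$, every such term is bounded by the leading positive quantities once $s\ge s_0(T^4+T^8)$. No boundary-in-time contributions survive, since $e^{2s\alpha}$ and its weighted derivatives vanish as $t\to0^+$ and $t\to T^-$.

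I would then dispose of the spatial boundary integrals generated by the integrations by parts. Using $\partial q/\partial\nu=0$, these reduce to integrals involving $\frac{\partial\eta_0}{\partial\nu}$, which is $\le0$ because $\eta_0$ vanishes on $\partial\Omega$ and is positive inside; this is exactly the sign needed to keep them on the favorable side. Since $|\nabla\eta_0|>0$ on $\overline{\Omega\setminus\omega_0}$, the positive quantities dominate the weighted $L^2$ norms of $\psi$ and $\nabla\psi$ over $\Omega\setminus\omega_0$; adding back the contribution over $\omega_0$, the latter is controlled by a local term $s^3\iint_{\omega_0}e^{2s\alpha}\phi^3|q|^2\,dxdt$, which in turn is absorbed into the $\omega'$-localized term of the statement through a cutoff argument based on $\omega_0\subset\subset\omega'$.

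It remains to recover the second spatial derivatives and the time derivative. From the conjugated equation I would solve for $\Delta\psi$ and for $\sigma\psi_t$ and bound their weighted $L^2$ norms by the quantities already controlled plus $\|g\|^2_{L^2(Q)}$; elliptic regularity with the Neumann condition then delivers the full Hessian, and --- this is where uniformity is transparent --- one estimates $\sigma^2|\psi_t|^2$ rather than $|\psi_t|^2$, so that nothing is ever divided by $\sigma$. Undoing the conjugation via $\psi=e^{s\alpha}q$ and $\nabla\psi=e^{s\alpha}(\nabla q+s\nabla\alpha\,q)$ converts these bounds into the weighted norms of $q$ making up $I_0(s,\sigma;q)$. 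Finally, the arbitrary power $\beta\in\mathbb{R}$ is obtained by repeating the same computation with the extra multiplier $(s\phi)^\beta$: since $\nabla(\phi^\beta)=\beta\lambda\phi^\beta\nabla\eta_0$ and $\partial_t(\phi^\beta)$ obey the same type of bounds, the additional commutator terms are of lower order in $s$ and are absorbed for $s\ge s_0(T^4+T^8)$, leaving precisely the local weight $\phi^{\beta+3}$ on $\omega'$ in the right-hand side. The main obstacle throughout is this bookkeeping of the $\sigma$-dependence: keeping $\sigma$ explicit (never rescaling time) and ensuring that every $\sigma$-term carries a bounded factor $\sigma\le1$, so that $C$, $\lambda_0$ and $s_0$ are genuinely independent of $\sigma$.
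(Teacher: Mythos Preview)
Your proposal is correct and aligns with the paper's approach: the paper does not actually prove this lemma but simply states that it ``can be deduced from the Carleman inequality for the heat equation with homogeneous Neumann boundary conditions given in \cite{F-Im}.'' Your outline is precisely a careful version of that deduction, redoing the Fursikov--Imanuvilov computation while tracking the $\sigma$-dependence explicitly to secure uniformity in $0<\sigma\le 1$; in particular, your observation that one must not rescale time (which would make $s_0$ depend on $\sigma$) and that the $\sigma\partial_t$ terms only ever appear with a bounded factor $\sigma\le 1$ is exactly the point the paper is implicitly relying on.
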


The proof of Lemma  \ref{lemma-4} can be deduced from the Carleman inequality for the heat equation with homogeneous Neumann boundary conditions given in \cite{F-Im}.

The main result of this section is  as follows:
\begin{theorem}\label{Theo-1}
Given $0 < \epsilon \leq 1$, there exist $C = C(\Omega, \omega')$ and  $\lambda_0 = \lambda_0(\Omega, \omega')$ such that,  for every $\lambda \geq \lambda_0$, there exists $s_0 = s_0(\Omega, \omega', \lambda)$ such that, for any $s \geq s_0(T^4 + T^8)$, any $(\varphi_T, \xi_T) \in L^2(\Omega)^2$ and any $f_1, f_2 \in L^2(Q)$, the solution $(\varphi, \xi)$ of system $(\ref{system-linear-adjoint})$ satisfies 
\begin{align}\label{Carleman-final}
s^3 \iint\limits_{Q} e^{2s\alpha}\phi^3 |\Delta \varphi|^2 dxdt + I_1(\epsilon, s; \xi) \leq& \  C \biggl(  s^{18} \iint\limits_{\omega' \times (0,T)} e^{2s\alpha}\phi^{18} |\xi|^2 dxdt  \\
& + s^{10} \iint\limits_{Q} e^{2s\alpha}\phi^{10} |f_1|^2 dxdt +  s^3 \iint\limits_{Q} e^{2s\alpha}\phi^3 |f_2|^2 dxdt\biggl).\nonumber
\end{align}

\end{theorem}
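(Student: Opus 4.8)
The plan is to combine two applications of Lemma \ref{lemma-4}, one to each equation of \eqref{system-linear-adjoint}, and then to remove the spurious local integral of $\varphi$ that this produces by exploiting the coupling through $\Delta\varphi$ in the second equation. Throughout I would reverse time via $\tau = T-t$, which turns the two backward equations into forward ones with data at $\tau = 0$ and leaves the weights $\phi,\alpha$ unchanged, since they depend on $t$ only through the symmetric factor $t^4(T-t)^4$; Lemma \ref{lemma-4} then applies directly to each.

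First I would apply Lemma \ref{lemma-4} to the $\varphi$-equation, read as $-\varphi_t - \Delta\varphi = a\xi + f_1$, with $\sigma = 1$ and $\beta = 4$. Since $I_4(s,1;\varphi)$ contains $s^3\iint\limits_Q e^{2s\alpha}\phi^3\sum_{i,j}|\partial^2_{ij}\varphi|^2$ and $|\Delta\varphi|^2 \le N\sum_{i,j}|\partial^2_{ij}\varphi|^2$, this already yields the desired term $s^3\iint\limits_Q e^{2s\alpha}\phi^3|\Delta\varphi|^2$, bounded by $s^4\iint\limits_Q e^{2s\alpha}\phi^4|a\xi+f_1|^2$ and by the local term $s^7\iint\limits_{\omega'\times(0,T)} e^{2s\alpha}\phi^7|\varphi|^2$. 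Next I would apply Lemma \ref{lemma-4} to the $\xi$-equation $-\epsilon\xi_t - \Delta\xi = -b\xi - M_1\Delta\varphi + f_2$ with $\sigma = \epsilon$, $\beta = 1$, bounding $I_1(\epsilon,s;\xi)$ by $s\iint\limits_Q e^{2s\alpha}\phi\,|b\xi + M_1\Delta\varphi - f_2|^2$ plus the local term $s^4\iint\limits_{\omega'\times(0,T)} e^{2s\alpha}\phi^4|\xi|^2$. Here the zeroth-order piece $s\,b^2\iint\limits_Q e^{2s\alpha}\phi|\xi|^2$ is absorbed into the leading term $s^4\iint\limits_Q e^{2s\alpha}\phi^4|\xi|^2$ of $I_1(\epsilon,s;\xi)$ (using $\phi \ge cT^{-8}>0$ and $s\ge s_0(T^4+T^8)$), while the cross term $s\,M_1^2\iint\limits_Q e^{2s\alpha}\phi|\Delta\varphi|^2$ is absorbed into $s^3\iint\limits_Q e^{2s\alpha}\phi^3|\Delta\varphi|^2$ for $s$ large.

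I would then form the combination $K\cdot(\text{the }\xi\text{-estimate}) + (\text{the }\varphi\text{-estimate})$. The source term $a^2 s^4\iint\limits_Q e^{2s\alpha}\phi^4|\xi|^2$ coming from the $\varphi$-equation cannot be absorbed by enlarging $s$, since it carries exactly the powers $s^4\phi^4$ of the leading $\xi$-term of $I_1$; instead I fix $K$ large, depending only on $a,M_1,\Omega,\omega'$, so that it is swallowed by $K\,I_1(\epsilon,s;\xi)$, and only afterwards take $s$ large to absorb the remaining $\Delta\varphi$ cross terms and the lower-order $\xi$ contributions. At this stage the right-hand side reduces to the two global terms in $f_1$ and $f_2$, the local term in $\xi$, and the single genuine obstruction $s^7\iint\limits_{\omega'\times(0,T)} e^{2s\alpha}\phi^7|\varphi|^2$: a local integral of $\varphi$, whereas the control only sees $\xi$.

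The hard part is precisely the elimination of this local $\varphi$-integral, and it is here that the coupling through $\Delta\varphi$ is simultaneously the difficulty and the cure. I would fix a cutoff $\theta \in C_c^\infty(\omega)$ with $\theta \equiv 1$ on $\omega'$ and use the second equation in the form $M_1\Delta\varphi = \epsilon\xi_t + \Delta\xi - b\xi + f_2$ on $\mathrm{supp}\,\theta$: multiplying by $\theta\,e^{2s\alpha}\phi^{k}\varphi$ and integrating by parts in $x$ trades the local $\varphi$-integral for a local integral of $|\nabla\varphi|^2$ and for localized pairings of $\varphi$ against $\xi_t$, $\Delta\xi$ and $\xi$; iterating this, together with the identity $\Delta\varphi = -\varphi_t - a\xi - f_1$ from the first equation and Caccioppoli-type local energy estimates, pushes all the derivatives onto the weight and onto $\xi$. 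The bookkeeping — each spatial derivative of $e^{2s\alpha}\phi^\ell$ costing a factor $\sim s\lambda\phi$ and each time derivative a factor $\sim s\phi^{5/4}$ — is what raises the exponent $7$ to $18$ on the local $\xi$-term and the exponents $4$ and $1$ to $10$ and $3$ on the $f_1$- and $f_2$-terms, while the resulting $|\nabla\varphi|^2$, $\epsilon^2|\xi_t|^2$ and $|\Delta\varphi|^2$ pieces are reabsorbed into $s^3\iint\limits_Q e^{2s\alpha}\phi^3|\Delta\varphi|^2$ and $I_1(\epsilon,s;\xi)$ by taking $\lambda$ and then $s$ large. The constraint $\int_\Omega \varphi_T\,dx = 0$ enters at this point: integrating the first equation over $\Omega$ shows that the spatial mean of $\varphi$ solves $-\tfrac{d}{dt}\int_\Omega \varphi = a\int_\Omega \xi + \int_\Omega f_1$ with zero terminal value, so the mean part of $\varphi$ — the part invisible to $\Delta\varphi$ — is itself controlled by $\xi$ and $f_1$ and never leaves an uncontrolled global $\varphi$-term. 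Collecting these estimates and choosing $\lambda$ large and $s \ge s_0(T^4+T^8)$ yields \eqref{Carleman-final}; I expect this local-$\varphi$-to-local-$\xi$ transfer to be the main obstacle, the two Carleman applications and the absorption with the large constant $K$ being comparatively routine.
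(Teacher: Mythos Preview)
Your elimination of the local $\varphi$-integral does not work as written, and the gap is structural rather than cosmetic. The coupling in the second equation is through $\Delta\varphi$, not through $\varphi$: multiplying $M_1\Delta\varphi = \epsilon\xi_t + \Delta\xi - b\xi + f_2$ by $\theta\,e^{2s\alpha}\phi^k\varphi$ and integrating by parts yields on the left $-M_1\iint\theta\,e^{2s\alpha}\phi^k|\nabla\varphi|^2$ plus commutators, i.e.\ a local bound on $|\nabla\varphi|^2$, not on $|\varphi|^2$. There is no local Poincar\'e-type step that recovers $\iint_{\omega'}e^{2s\alpha}\phi^7|\varphi|^2$ from local $|\nabla\varphi|^2$ or $|\Delta\varphi|^2$; the mean-zero condition on $\varphi_T$ is global and does not help on $\omega'$. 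Your alternative of combining both equations to write $M_1\varphi_t$ in terms of $\xi$-quantities still leaves $\varphi_T(x)$ (not only its mean) uncontrolled. So the ``local-$\varphi$-to-local-$\xi$ transfer'' you flag as the main obstacle is in fact an obstacle you cannot clear along this route.

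The paper avoids this entirely by never producing a local $\varphi$-term: it treats $\Delta\varphi$ as a new unknown, observes that it solves the Neumann heat equation $-(\Delta\varphi)_t - \Delta(\Delta\varphi) = a\Delta\xi + \Delta f_1$, and applies a Carleman estimate directly to that equation. Since the right-hand side is only in a negative-order space ($\Delta f_1$ with $f_1\in L^2$), this requires the ``Carleman by transposition'' of Lemma~\ref{transpocart} rather than Lemma~\ref{lemma-4}. The outcome is \eqref{1cal}: the spurious local term is $s^3\iint_{\omega_1}e^{2s\alpha}\phi^3|\Delta\varphi|^2$, and this \emph{can} be traded against $\xi$ because the second equation expresses $\Delta\varphi$ algebraically in terms of $\epsilon\xi_t,\Delta\xi,\xi,f_2$. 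The actual exchange (equations \eqref{local-1}--\eqref{local-14}) still needs work --- a splitting $\theta\Delta\varphi = \eta+\psi$ and duality/energy estimates for the two pieces (Lemmas~\ref{estimate-1} and~\ref{lemmaenerg-1}) --- but it is now a bookkeeping exercise of the kind you describe, rather than an impossibility. Note also that the target inequality \eqref{Carleman-final} only carries $\Delta\varphi$ on the left, not $\varphi$; this is consistent with working at the level of $\Delta\varphi$ throughout.
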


\begin{proof} 

For the purpose of the proof, let $\omega_i \subset \Omega$, $i = 1, 2,3$ be such that 
$$
\omega_0 \subset \subset \omega_1\subset \subset \omega_2 \subset \subset \omega_3 \subset \subset \omega 
$$
and let $\rho_i \geq 0$, $\left (i = 1,2 \right) $ satisfies 
$$
\rho_i \in C^2_c(\omega_{i+1}), \ \rho_i =1 \ \text{in} \ \omega_i.
$$

Let us assume  for the moment that  $f_1$, $f_2 \in C_0^{\infty}(Q)$ and $\varphi_T, \xi_T \in C^{\infty}_0(\Omega)$. From $\eqref{system-linear-adjoint}_1$ (the first equation in \eqref{system-linear-adjoint}) we have that $\Delta \varphi$ satisfies 
\begin{equation}\label{Deltavarphi}
\left |   
\begin{array}{ll}
-(\Delta \varphi)_{t}  - \Delta (\Delta \varphi)  = a \Delta \xi  + \Delta f_1 &     \mbox{in}  \  \ Q,  \\
\frac{\partial  \Delta \varphi}{\partial \nu} = 0     &    \mbox{on}  \  \   \Sigma, \\
\Delta\varphi(x,T) = \Delta \varphi_T   &    \mbox{in}    \  \  \Omega. \\
\end{array}
\right. 
\end{equation}
Applying inequality \eqref{heat-neumann} to $\eqref{system-linear-adjoint}_2$ (the second equation in \eqref{system-linear-adjoint}), inequality \eqref{car-transpo} (in appendix \ref{ApA})  to \eqref{Deltavarphi}  and adding these two inequalities, we get
\begin{align}\label{1cal}
s^3 \iint\limits_{Q} e^{2s\alpha}\phi^3 |\Delta \varphi|^2 dxdt & +  I_1(\epsilon, s; \xi) \nonumber \\
 \leq &  \  C \biggl(  s^3 \iint\limits_{\omega_1 \times (0,T)} e^{2s\alpha}\phi^3 |\Delta \varphi|^2 dxdt + s^4 \iint\limits_{\omega' \times (0,T)} e^{2s\alpha}\phi^4 |\xi|^2 dxdt \nonumber \\
 & +  s^4 \iint\limits_{Q} e^{2s\alpha}\phi^4 |f_1|^2 dxdt +  s \iint\limits_{Q} e^{2s\alpha}\phi |f_2|^2 dxdt \biggl),
\end{align}
for $s \geq s_0(T^4+T^8)$.

The rest of the proof is devoted to estimate  the local integral in $\Delta \varphi$ in the right-hand side of \eqref{1cal}.  First, we observe that 
\begin{align}\label{local-1}
  s^3 \iint\limits_{\omega_1 \times (0,T)} e^{2s\alpha}\phi^3 |\Delta \varphi|^2 dxdt  & \leq   s^3 \iint\limits_{\omega_3 \times (0,T)} \rho_1 \rho_2e^{2s\alpha}\phi^3 |\Delta \varphi|^2 dxdt  \\
  & =  -\frac{s^3}{M_1} \iint\limits_{\omega_3 \times (0,T)} \rho_1 \rho_2 \phi^3 e^{2s\alpha}\Delta \varphi (-\epsilon \xi_t - \Delta \xi   + b\xi- f_2  ) dxdt \nonumber 
\end{align}
and estimate each one of the terms in the right-hand side of \eqref{local-1}.

The following estimate is straightforward
\begin{align}\label{local-15}
s^3 \iint\limits_{\omega_3 \times (0,T)} \rho_1 \rho_2 e^{2s\alpha} \phi^3 \Delta \varphi(b\xi- f_2)dxdt \leq &  \  C_{\delta}s^3  \iint\limits_{\omega_3 \times (0,T)} e^{2s\alpha} \phi^3  (|\xi|^2 +|f_2|^2) dxdt \nonumber \\
&+ \delta s^3 \iint\limits_{\omega_3 \times (0,T)}  e^{2s\alpha} \phi^3 |\Delta \varphi |^2 dxdt, 
\end{align}
for any $\delta >0$.

In order to estimate the other two terms in \eqref{local-1}, we introduce a  function $\theta = \theta(x,t)$ given by
\begin{equation}\label{theta}
\theta = \rho_1 s^3\phi^3 e^{s\alpha}.
\end{equation}
From \eqref{Deltavarphi} we see that 
\begin{equation}\label{Deltavarphi-1}
\left |   
\begin{array}{ll}
-(\theta \Delta \varphi)_{t}  - \Delta (\theta \Delta \varphi)  = a \theta \Delta \xi  + \theta \Delta f_1 - \theta_t \Delta \varphi - 2\nabla \theta \cdot \nabla (\Delta \varphi)  - \Delta \theta \Delta \varphi&     \mbox{in}  \  \ Q,  \\
\theta \Delta \varphi = 0     &    \mbox{on}  \  \   \Sigma, \\
\theta \Delta\varphi(x,T) = 0   &    \mbox{in}    \  \  \Omega. \\
\end{array}
\right. 
\end{equation}
We write $\theta \Delta \varphi = \eta + \psi$, where $\eta$ and $\psi$ solve, respectively, 
\begin{equation}\label{eta}
\left |   
\begin{array}{ll}
-\eta_{t}  - \Delta \eta = a \theta \Delta \xi  + \theta \Delta f_1&     \mbox{in}  \  \ Q,  \\
\eta = 0     &    \mbox{on}  \  \   \Sigma, \\
\eta (x,T) = 0  &    \mbox{in}    \  \  \Omega \\
\end{array}
\right. 
\end{equation}
and 
\begin{equation}\label{psi}
\left |   
\begin{array}{ll}
-\psi_{t}  - \Delta \psi =  - \theta_t \Delta \varphi - 2\nabla \theta \cdot \nabla (\Delta \varphi)  - \Delta \theta \Delta \varphi&     \mbox{in}  \  \ Q,  \\
\psi= 0     &    \mbox{on}  \  \   \Sigma, \\
\psi(x,T) = 0   &    \mbox{in}    \  \  \Omega. \\
\end{array}
\right. 
\end{equation}
We have 
\be\label{Deltaxi}
s^3 \iint\limits_{\omega_3 \times (0,T)} \rho_1\rho_2 e^{2s\alpha} \phi^3 \Delta \varphi \Delta \xi dxdt =  \iint\limits_{\omega_3 \times (0,T)} \rho_2e^{s\alpha} (\eta +\psi) \Delta \xi dxdt.
\ee
The first term in the right-hand side of \eqref{Deltaxi} can be estimated as follows
\be\label{local-11}
 \iint\limits_{\omega_3 \times (0,T)} \rho_2e^{s\alpha} \eta \Delta \xi dxdt \leq \delta \iint\limits_{\omega_3 \times (0,T)} e^{2s\alpha}|\Delta \xi|^2 dxdt + C_{\delta}s^{10} \iint\limits_{\omega_2 \times (0,T)} \phi^{10} e^{2s\alpha}(|\xi|^2+|f_1|^2) dxdt.
\ee
In fact, to prove \eqref{local-11},  we use following estimate: 
\begin{lemma} \label{estimate-1}
The solution $\eta$ of \eqref{eta} satisfies
\be\label{local-x1}
 \iint\limits_{Q}  |\eta|^2 dxdt \leq  C s^{10} \iint\limits_{\omega_2 \times (0,T)} e^{2s\alpha} \phi^{10}(|\xi|^2+|f_1|^2) dxdt,
\ee
for a constant $C>0$.
\end{lemma}
We prove estimate \eqref{local-x1} at the end of appendix \ref{ApA}.

%The following two estimate holds
%\begin{lemma} \label{lemmaenerg}
%\be
% \iint\limits_{Q}  |\eta|^2 dxdt \leq  C s^{10} \iint\limits_{\omega_2 \times (0,T)} e^{2s\alpha} \phi^{10}(|\xi|^2+|f_1|^2) dxdt.
%\ee
%\end{lemma}
%
%\begin{lemma}\label{lemmaenerg-1}
%\begin{align}\label{energorpsi-1}
%|| \frac{\psi}{s^{11/4}\phi^{11/4}}||^2_{L^2(0,T; H^1_0(\Omega))} & +  ||(\frac{\psi}{s^{11/4}\phi^{11/4}})_t||^2_{L^2(0,T; H^{-1}(\Omega))}  \nonumber \\
%&\leq   C \bigl( \iint\limits_{Q} |\eta|^2 dxdt + s^{3} \iint\limits_{Q} \phi^{3} e^{2s\alpha}|\Delta \varphi|^2 dxdt\bigl).
%\end{align}
%
%\end{lemma}

For the second term in the right-hand side of \eqref{Deltaxi}, we use integration by parts to get
\begin{align}\label{local-2}
 \iint\limits_{\omega_3 \times (0,T)} \rho_2e^{s\alpha} \psi \Delta \xi dxdt =& - \iint\limits_{\omega_3 \times (0,T)} \rho_2 s^{7/2}\phi^{7/2} e^{s\alpha} \nabla \xi \cdot \nabla \bigl( \frac{\psi}{s^{7/2}\phi^{7/2} } \bigl)dxdt  \nonumber \\
 %&- \iint\limits_{\omega_3 \times (0,T)} \lambda \rho_2 s^{9/2}  \phi^{9/2} e^{s\alpha}\nabla \xi \cdot \nabla \eta_0  \frac{\psi}{s^{7/2}\phi^{7/2} } dxdt \nonumber \\
& - \iint\limits_{\omega_3 \times (0,T)} \nabla (\rho_2 s^{7/2}  \phi^{7/2} e^{s\alpha}) \cdot \nabla \xi \cdot  \frac{\psi}{s^{7/2}\phi^{7/2} } dxdt \nonumber \\
& \leq C_{\delta}\iint\limits_{\omega_3 \times (0,T)} s^{9}\phi^{9} e^{2s\alpha} |\nabla \xi|^2dxdt   +  \delta \left \| \frac{\psi}{s^{7/2}\phi^{7/2} }\right \|^2_{L^2(0,T; H^1_0(\Omega))}, 
\end{align}
for any $\delta >0$.

For the sequel, we need the following result:
\begin{lemma}\label{lemmaenerg-1} The solution $\psi$ of \eqref{psi} can be estimated as follows:
\begin{align}\label{energorpsi-1}
\left \| \frac{\psi}{s^{7/2}\phi^{7/2}}\right \|^2_{L^2(0,T; H^1_0(\Omega))} & +  \left \| \left (\frac{\psi}{s^{7/2}\phi^{7/2}}\right )_t\right \|^2_{L^2(0,T; H^{-1}(\Omega))}  \nonumber \\
&\leq   C \biggl ( \iint\limits_{Q} |\eta|^2 dxdt + s^{3} \iint\limits_{Q} \phi^{3} e^{2s\alpha}|\Delta \varphi|^2 dxdt \biggl), 
\end{align}
for a constant $C>0$ independent of $s$.
\end{lemma}
In order to prove Lemma \ref{lemmaenerg-1} , we consider the system satisfied by $\frac{\psi}{s^{7/2}\phi^{7/2}}$ and we perform standard energy estimates. This yields the $L^2(0,T; H^1_0(\Omega))$ estimate. Then, the $L^2(0,T; H^{-1}(\Omega))$ estimate  is a direct consequence from the fact that 
$$
\left \| \Delta \left (\frac{\psi}{s^{7/2}\phi^{7/2}}\right )\right \|_{L^2(0,T; H^{-1}(\Omega))} \leq C \left \| \frac{\psi}{s^{7/2}\phi^{7/2}}\right \|_{L^2(0,T; H^{1}_0(\Omega))}.
$$
For the sake of simplicity, we omit the complete proof.

From \eqref{local-2} and  Lemma \ref{lemmaenerg-1}, it follows that 
\begin{align}\label{local-1-1}
 \iint\limits_{\omega_3 \times (0,T)} \rho_2 e^{s\alpha} \psi \Delta \xi dxdt   \leq & \ C \iint\limits_{\omega_3 \times (0,T)} s^{9}\phi^{9} e^{2s\alpha} |\nabla \xi|^2dxdt \nonumber \\ 
& + \delta \biggl(  \iint\limits_{Q} |\eta|^2 dxdt + s^3 \iint\limits_{Q}  \phi^3 e^{2s\alpha}|\Delta \varphi|^2 dxdt\biggl). 
\end{align} 
Hence, from \eqref{energorpsi-1} and  \eqref{local-1-1}, the term in \eqref{Deltaxi} is estimated as follows:
\begin{align}\label{Deltaxi-1}
s^3 \iint\limits_{\omega_3 \times (0,T)} \rho_1\rho_2 e^{2s\alpha} \phi^3 \Delta \varphi \Delta \xi dxdt  \leq & \ C \biggl( \iint\limits_{\omega_3 \times (0,T)} s^{9}\phi^{9} e^{2s\alpha} |\nabla \xi|^2dxdt \nonumber \\
& + s^{10} \iint\limits_{\omega_2 \times (0,T)} \phi^{10} e^{2s\alpha}(|\xi|^2+|f_1|^2) dxdt \biggl) \\
& + \delta  \biggl(  \iint\limits_{\omega_3 \times (0,T)} e^{2s\alpha}|\Delta \xi|^2 dxdt + s^3 \iint\limits_{Q}  \phi^3 e^{2s\alpha}|\Delta \varphi|^2 dxdt \biggl).  \nonumber 
\end{align}

Let us now estimate the first term in the right-hand side of \eqref{local-1}. We have 
\begin{align}\label{xit}
 \epsilon s^{3} \iint\limits_{\omega_3 \times (0,T)} \rho_1\rho_2 e^{2s\alpha} \phi^{3}\Delta \varphi \xi_t dxdt =& \  \epsilon \iint\limits_{\omega_3 \times (0,T)} \rho_2 e^{s\alpha}(\eta + \psi)\xi_t dxdt.  
 \end{align}
It is immediate that 
\begin{align}
 \epsilon \iint\limits_{\omega_3 \times (0,T)} \rho_2 e^{s\alpha}\eta\xi_t dxdt \leq  C_\delta  \iint\limits_{\omega_3 \times (0,T)} |\eta|^2dxdt +  \delta \epsilon^2 \iint\limits_{\omega_3 \times (0,T)} e^{2s\alpha}|\xi_t|^2dxdt,   
\end{align}
for any $\delta >0$.

For the other term in \eqref{xit}, we have 
\begin{align}
\epsilon \iint\limits_{\omega_3 \times (0,T)} \rho_2 e^{s\alpha}\psi\xi_t dxdt =& \  \epsilon \biggl< \left (\frac{\psi}{s^{7/2}\phi^{7/2}}\right)_t,e^{s\alpha}s^{7/2}\phi^{7/2}\xi \rho_2 \biggl>_{L^2(0,T;H^{-1}(\Omega), L^2(0,T;H^{1}_0(\Omega))} \nonumber \\
&+ \iint\limits_{\omega_3 \times (0,T)}\rho_2\frac{\psi}{s^{7/2}\phi^{7/2}}(e^{s\alpha}s^{7/2}\phi^{7/2})_t \xi dxdt     \nonumber \\
 \leq & \  \delta \epsilon^2 \left \|\left (\frac{\psi}{s^{7/2}\phi^{7/2}}\right)_t\right \|^2_{L^2(0,T; H^{-1}(\Omega))} + C_{\delta } \left \|s^{7/2}\phi^{7/2} e^{s\alpha} \rho_ 2\xi\right \|^2_{L^2(0,T; H^{1}_0(\Omega))}  \nonumber \\
 & +  \delta \left \| \frac{\psi}{s^{7/2}\phi^{7/2}}\right \|^2_{L^2(Q)} + C_{\delta} \iint\limits_{\omega_3 \times (0,T)}|(e^{s\alpha}s^{7/2}\phi^{7/2})_t|^2 |\xi|^2 dxdt.
\end{align}

Therefore, from Lemma \ref{lemmaenerg-1}, we obtain 
\begin{align}\label{local-10}
 \epsilon s^{3} \iint\limits_{\omega_3 \times (0,T)} \rho_1\rho_2 e^{2s\alpha} \phi^{3}\Delta \varphi \xi_t dxdt  \leq & \ C \biggl( \iint\limits_{\omega_3 \times (0,T)} |\eta|^2dxdt + \left \|s^{7/2}\phi^{7/2} e^{s\alpha} \rho_ 2\xi\right \|^2_{L^2(0,T; H^{1}_0(\Omega))}   \nonumber \\
&+ \iint\limits_{\omega_3 \times (0,T)}|(e^{s\alpha}s^{7/2}\phi^{7/2})_t|^2 |\xi|^2 dxdt \biggl)  +  \delta \biggl( \epsilon^2 \iint\limits_{\omega_3 \times (0,T)} e^{2s\alpha}|\xi_t|^2dxdt  \nonumber \\
 & +s^{3} \iint\limits_{Q} \phi^{3} e^{2s\alpha}|\Delta \varphi|^2 dxdt\biggl) .
\end{align}

Hence, from \eqref{local-15}, \eqref{Deltaxi-1} and  \eqref{local-10}, we get
\begin{align}\label{local-14}
  s^3 \iint\limits_{\omega_1 \times (0,T)} e^{2s\alpha}\phi^3 |\Delta \varphi|^2 dxdt  \leq &\  C\biggl(s^3  \iint\limits_{\omega_2 \times (0,T)} e^{2s\alpha} \phi^3  (|\xi|^2 +|f_2|^2) dxdt  +   \iint\limits_{\omega_3 \times (0,T)} s^{9}\phi^{9} e^{2s\alpha} |\nabla \xi|^2dxdt \nonumber \\
 & +   \left \|s^{7/2}\phi^{7/2} e^{s\alpha} \rho_ 2\xi\right \|^2_{L^2(0,T; H^{1}_0(\Omega))} + \iint\limits_{\omega_3 \times (0,T)}|(e^{s\alpha}s^{7/2}\phi^{7/2})_t|^2 |\xi|^2 dxdt  \nonumber \\
&+  s^{10} \iint\limits_{\omega_2 \times (0,T)} e^{2s\alpha} \phi^{10}(|\xi|^2+|f_1|^2) dxdt \biggl) \nonumber \\
& + \delta \biggl( \iint\limits_{\omega_3 \times (0,T)} e^{2s\alpha}|\Delta \xi|^2 dxdt + s^3 \iint\limits_{Q} \phi^3 e^{2s\alpha}|\Delta \varphi|^2 dxdt \nonumber \\
&+ \epsilon^2 \iint\limits_{\omega_3 \times (0,T)} e^{2s\alpha} |\xi_t|^2dxdt\biggl), 
\end{align}
for any $\delta >0$.

Combining \eqref{local-14} and \eqref{1cal}, and taking $\delta >0$ small enough, we obtain
\begin{align}\label{1cal-1}
s^3 \iint\limits_{Q} e^{2s\alpha}\phi^3 |\Delta \varphi|^2 dxdt + I_1(\epsilon, s; \xi) \leq& \  C \biggl(  s^4 \iint\limits_{\omega' \times (0,T)} e^{2s\alpha}\phi^4 |\xi|^2 dxdt  + \iint\limits_{\omega_3 \times (0,T)} s^{9}\phi^{9} e^{2s\alpha}(|\xi|^2 + |\nabla \xi|^2)dxdt\nonumber \\
& + s^{10} \iint\limits_{Q} e^{2s\alpha}\phi^{10} |f_1|^2 dxdt +  s^3 \iint\limits_{Q} e^{2s\alpha}\phi^3 |f_2|^2 dxdt \biggl ).
\end{align}
To finish the proof, we estimate the local integrals involving $\nabla \xi$ in \eqref{1cal-1}.

Integration by parts gives
\begin{align}\label{2.26}
 \iint\limits_{\omega_3 \times (0,T)} s^{9}\phi^{9} e^{2s\alpha} |\nabla \xi|^2dxdt  \leq &   \iint\limits_{\omega' \times (0,T)} \rho s^{9}\phi^{9} e^{2s\alpha} |\nabla \xi|^2dxdt \nonumber \\
% & = - \iint\limits_{\omega_3 \times (0,T)} s^{15/2}\phi^{15/2} e^{2s\alpha} \xi \nabla \xi \cdot \nabla  \rho  dxdt \nonumber \\
 = & \  - \iint\limits_{\omega' \times (0,T)} \rho s^{9}\phi^{9} e^{2s\alpha} \Delta \xi \xi  dxdt  \nonumber \\
 & -  \iint\limits_{\omega' \times (0,T)} \nabla(s^{9}\phi^{9} e^{2s\alpha}\rho ) \cdot \nabla \xi \xi    dxdt,
\end{align}
where $ \rho \in C^2_c(\omega')$ is such that 
$$
\rho \geq 0 \ \text{and} \  \rho =1 \ \text{in} \ \omega_3.
$$

From \eqref{1cal-1} and \eqref{2.26}, we obtain 
\begin{align}\label{1cal-2}
s^3 \iint\limits_{Q} e^{2s\alpha}\phi^3 |\Delta \varphi|^2 dxdt + I_1(\epsilon, s; \xi) \leq& \  C \biggl(  s^{18} \iint\limits_{\omega' \times (0,T)} e^{2s\alpha}\phi^{18} |\xi|^2 dxdt  \\
& + s^{10} \iint\limits_{Q} e^{2s\alpha}\phi^{10} |f_1|^2 dxdt +  s^3 \iint\limits_{Q} e^{2s\alpha}\phi^3 |f_2|^2 dxdt\biggl). \nonumber
\end{align}

Using the density of $C^\infty_0(Q)$ and $C^\infty_0(\Omega)$ in $L^2(Q)$ and $L^2(\Omega)$, respectively, we finish the proof of Theorem  \ref{Theo-1}.

%\begin{align}
% =  \iint\limits_{\omega_3 \times (0,T)} s^{2(11/4+1)}\phi^{2(11/4+1)} e^{2s\alpha} \nabla \xi \cdot \nabla  \rho  dxdt =& - \iint\limits_{\omega_3 \times (0,T)} \nabla (s^{2(11/4+1)}\phi^{2(11/4+1)} e^{2s\alpha})  \cdot \nabla  \rho  \xi  dxdt \nonumber \\
%& - \iint\limits_{\omega_3 \times (0,T)} s^{2(11/4+1)}\phi^{2(11/4+1)} e^{2s\alpha}  \xi  \Delta  \rho  dxdt 
%\end{align}
%and 
%\begin{align}
% \iint\limits_{\omega_3 \times (0,T)} \nabla(s^{2(11/4+1)}\phi^{2(11/4+1)} e^{2s\alpha} ) \cdot \nabla \xi  \rho  dxdt =&  - \iint\limits_{\omega_3 \times (0,T)} \nabla(s^{2(11/4+1)}\phi^{2(11/4+1)} e^{2s\alpha} ) \cdot \nabla  \rho  \xi dxdt \nonumber \\
% & - \iint\limits_{\omega_3 \times (0,T)} \Delta (s^{2(11/4+1)}\phi^{2(11/4+1)} e^{2s\alpha} ) \xi  \rho  dxdt 
%\end{align}
%
%For the other, 

%\begin{align}
 %\nabla ( s^{11/4}\phi^{11/4} e^{s\alpha} \rho_ 2 \xi) =  \nabla ( s^{11/4}\phi^{11/4} e^{s\alpha} \rho_ 2 ) \xi  +   s^{11/4}\phi^{11/4} e^{s\alpha} \rho_ 2 \nabla  \xi
%\end{align}

%\begin{align}
%| \nabla ( s^{11/4}\phi^{11/4} e^{s\alpha} \rho_ 2 \xi)| = &  |\nabla ( s^{11/4}\phi^{11/4} e^{s\alpha} \rho_ 2 ) \xi|  +  | s^{11/4}\phi^{11/4} e^{s\alpha} \rho_ 2 \nabla  \xi| \nonumber \\
%& \leq  C(\lambda)s^{15/4}  \phi^{15/4}e^{s\alpha} |\xi| +   s^{11/4}\phi^{11/4} e^{s\alpha}  |\nabla  \xi| \ \text{in} \ \omega_3 \times (0,T).
%\end{align}

\end{proof}

%$$
%|\nabla ( s^{11/4}\phi^{11/4} e^{s\alpha} \rho_ 2 )| \leq C(\lambda)s^{15/4}  \phi^{15/4}e^{s\alpha}
%$$
%
%$$
%|\nabla ( s^{11/2}\phi^{11/2} e^{s\alpha} \rho_ 2 )| \leq C(\lambda)s^{13/2}  \phi^{13/2}e^{s\alpha}
%$$
%$$
%|\nabla ( s^{15/2}\phi^{15/2} e^{s\alpha}\rho)| \leq C(\lambda)s^{17/2}  \phi^{17/2}e^{s\alpha}
%$$
%which gives 

%%%%%%%%%%%%%%%%%%%%%%%%
%%%%%%%%%%%%%%%%%%%%%%%%
%%%%%%%%%%%%%%%%%%%%%%%%
%%%%%%%%%%%%%%%%%%%%%%%%
%%%%%%%%%%%%%%%%%%%%%%%%
%%%%%%%%%%%%%%%%%%%%%%%%
%%%%%%%%%%%%%%%%%%%%%%%%
%%%%%%%%%%%%%%%%%%%%%%%%
%%%%%%%%%%%%%%%%%%%%%%%%
%%%%%%%%%%%%%%%%%%%%%%%%
%%%%%%%%%%%%%%%%%%%%%%%%
%%%%%%%%%%%%%%%%%%%%%%%%
%%%%%%%%%%%%%%%%%%%%%%%%
%%%%%%%%%%%%%%%%%%%%%%%%

\section{Null controllability of the linear system with a right-hand side}\label{sec3}

In this section we want to solve the null controllability problem for the system \eqref{system-linear} with a right-hand side which decays exponentially as $t \rightarrow T^-$. 

This result will be crucial when proving the local  controllability of \eqref{system} in the next section.

Indeed, for any $0 < \epsilon \leq 1$, we would like to find a control $g = g(\epsilon)$, bounded independently of $\epsilon$,  such that the solution to 

\begin{equation}\label{system-linear-4}
\begin{cases}
L(u,v) = (h_1, g \chi + h_2)&     \mbox{in}  \  \ Q, \\
\frac{\partial u}{\partial \nu} = \frac{\partial v}{\partial \nu} = 0     &    \mbox{on}  \  \   \Sigma, \\
u(x,0) = u_0;  \ v(x,0) = v_0   &    \mbox{in}    \  \  \Omega, \\
\end{cases}
\end{equation}
where
\begin{equation}\label{5.2}
 L(u,v) = (u_{t}  - \Delta u  + M_1\Delta  v, \epsilon v_t - \Delta v +bv -au),
 \end{equation}
satisfies
\begin{equation}\label{5.3}
u(x,T) = 0;  \ v(x,T) = 0   \     \mbox{in}     \  \Omega.
\end{equation}
Furthermore, it will be convenient to prove the existence of a solution of the previous problem in an appropriate weighted space. Before introducing the spaces where we solve  problem \eqref{system-linear-4}-\eqref{5.3}, we improve the Carleman inequality obtained in the previous section. This Carleman inequality will contain only weight functions that do not vanish at $t=0$.  In order to introduce these new weights, let us consider the function
	
\begin{equation}\label{system-linear-8}
l(t) = \left\{ 
\begin{array}{llcc}
(T^2/4)  & \text{if} \ 0 \leq t \leq T/2    \\
t(T-t) & \text{if} \  T/2 \leq t \leq T.
\end{array}
\right. 
\end{equation}
and  we define our new weight functions to be
$$
\beta(x,t) = \frac{e^{\lambda \eta^0(x)} - e^{2\lambda||\eta_0||_{\infty}} }{l^4(t)}, \  \gamma(x,t) = \frac{e^{\lambda \eta_0(x)}}{l^4(t)},
$$
\begin{align} \label{weightfunctions1}
\hat{\gamma}(t) = \min_{x \in \overline{\Omega}} \gamma(x,t), \ \gamma^*(t) = \max_{x \in \overline{\Omega}} \phi(x,t), \ \beta^*(t) = \max_{x \in \overline{\Omega}}\beta (x,t),\ \hat{\beta} = \min_{x \in \overline{\Omega}}\beta (x,t).
\end{align}

With these new weights, we can state our refined Carleman estimate as follows:

\begin{lemma}
Given $0 < \epsilon \leq 1$,  there exists a positive constant $C$ depending on $T$, $s$ and $\lambda$, such that every solution of \eqref{system-linear-adjoint} verifies:
\begin{align}\label{C-8}
& \iint\limits_Q e^{2s\beta}\gamma^4|\xi|^2dxdt+ \iint\limits_Q  e^{2s\beta}\gamma^{2} |\nabla \xi|^2dxdt + \iint\limits_Q e^{2s\hat{\beta}} \hat{\gamma}^{3}| \varphi - \bigl(\varphi\bigl)_\Omega |^2dxdt \nonumber \\
& + \iint\limits_Q e^{2s\hat{\beta}}\hat{\gamma}^{3} |\nabla \varphi|^2dxdt  +  \left \| \varphi(.,0) - \bigl(\varphi(.,0)\bigl)_\Omega \right \|^2_{L^2(\Omega)} + \epsilon \left \| \xi(.,0)\right \|^2_{L^2(\Omega)}  \nonumber \\
&\leq C\biggl( \iint\limits_Q e^{2s\beta^*} (\gamma^*)^{10}|f_1|^2dxdt  +  \iint\limits_Q e^{ 2 s\beta^*}(\gamma^*)^{3} |f_2|^2dxdt +  \iint\limits_Q e^{2s\beta^*}(\gamma^*)^{18} |\chi|^2|\xi|^2dxdt\biggl),
\end{align}
where 
$$
 \bigl(\varphi\bigl)_\Omega(t) = \frac{1}{|\Omega|}\int_\Omega \varphi(x,t)dx.
$$
\end{lemma}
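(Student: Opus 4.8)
The plan is to deduce the refined estimate \eqref{C-8} from the Carleman inequality \eqref{Carleman-final} of Theorem \ref{Theo-1} in three moves: (i) upgrade the control of $\Delta\varphi$ to control of $\varphi-(\varphi)_\Omega$ and $\nabla\varphi$; (ii) transfer the estimate on the subinterval $[T/2,T]$, where the two families of weights coincide; and (iii) run a backward energy estimate to cover $[0,T/2]$ and to produce the two initial-time terms. For the first move, fix $t$: since $\varphi(\cdot,t)$ has Neumann data and $\int_\Omega(\varphi-(\varphi)_\Omega)\,dx=0$, integration by parts gives $\|\nabla\varphi\|_{L^2(\Omega)}^2=-\int_\Omega(\varphi-(\varphi)_\Omega)\Delta\varphi\,dx$, and together with Poincar\'e--Wirtinger $\|\varphi-(\varphi)_\Omega\|_{L^2(\Omega)}\le C\|\nabla\varphi\|_{L^2(\Omega)}$ this yields the slicewise elliptic bound $\|\varphi-(\varphi)_\Omega\|_{L^2(\Omega)}+\|\nabla\varphi\|_{L^2(\Omega)}\le C\|\Delta\varphi\|_{L^2(\Omega)}$. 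Because this is done at each time, it forces weights constant in $x$: multiplying by $e^{2s\hat\alpha}\hat\phi^{3}$, integrating in $t$ and using $e^{2s\hat\alpha}\hat\phi^{3}\le e^{2s\alpha}\phi^{3}$, the terms $\iint_Q e^{2s\hat\alpha}\hat\phi^{3}(|\varphi-(\varphi)_\Omega|^2+|\nabla\varphi|^2)\,dxdt$ are dominated by the $\Delta\varphi$ term on the left of \eqref{Carleman-final}, hence by its right-hand side.

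For the change of weights, note that by the construction of $l$ in \eqref{system-linear-8} one has $\beta\equiv\alpha$ and $\gamma\equiv\phi$ on $[T/2,T]$. Thus on this subinterval the $\xi$-contributions $\iint e^{2s\beta}\gamma^{4}|\xi|^2$ and $\iint e^{2s\beta}\gamma^{2}|\nabla\xi|^2$ are, up to fixed powers of $s$, exactly the corresponding pieces of $I_1(\epsilon,s;\xi)$, while the $\varphi$-contributions are handled by the previous paragraph; all are controlled by the right-hand side of \eqref{Carleman-final}. The local term $s^{18}\iint_{\omega'\times(0,T)}e^{2s\alpha}\phi^{18}|\xi|^2$ is dominated by $\iint_Q e^{2s\beta^*}(\gamma^*)^{18}|\chi|^2|\xi|^2$, since $\chi\equiv1$ on $\omega'$ and $e^{2s\alpha}\phi^{18}\le Ce^{2s\beta^*}(\gamma^*)^{18}$; likewise the source weights $e^{2s\alpha}\phi^{10}$ and $e^{2s\alpha}\phi^{3}$ are bounded by $e^{2s\beta^*}(\gamma^*)^{10}$ and $e^{2s\beta^*}(\gamma^*)^{3}$. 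Hence the right-hand side of \eqref{C-8} already dominates the $[T/2,T]$ portion of every term on its left.

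It remains to treat $[0,T/2]$, where the new weights are bounded above and below by positive constants (depending on $T,s,\lambda$), so it suffices to bound $\int_0^{T/2}(\|\varphi-(\varphi)_\Omega\|^2+\|\nabla\varphi\|^2+\|\xi\|^2+\|\nabla\xi\|^2)\,dt$ together with $\|\varphi(\cdot,0)-(\varphi(\cdot,0))_\Omega\|^2$ and $\epsilon\|\xi(\cdot,0)\|^2$. I would set $\tilde E(t)=\tfrac{\kappa}{2}\|\varphi-(\varphi)_\Omega\|_{L^2(\Omega)}^2+\tfrac{\epsilon}{2}\|\xi\|_{L^2(\Omega)}^2$, multiply the first equation of \eqref{system-linear-adjoint} by $\kappa(\varphi-(\varphi)_\Omega)$ and the second by $\xi$, integrate over $\Omega$ and add. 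Using $\int_\Omega\Delta\varphi\,dx=0$ the time derivatives combine into $-\tfrac{d}{dt}\tilde E$, the Laplacians produce $\kappa\|\nabla\varphi\|^2+\|\nabla\xi\|^2+b\|\xi\|^2$, the coupling $-M_1\int_\Omega\Delta\varphi\,\xi\,dx=M_1\int_\Omega\nabla\varphi\cdot\nabla\xi\,dx$ is split by Young's inequality, and the terms $a\int\xi(\varphi-(\varphi)_\Omega)$, $\int f_1(\varphi-(\varphi)_\Omega)$, $\int f_2\xi$ are controlled by Young and Poincar\'e--Wirtinger. Taking $\kappa=\kappa(M_1)$ large enough to absorb $\tfrac{M_1^2}{2}\|\nabla\varphi\|^2$ into $\kappa\|\nabla\varphi\|^2$ gives the dissipation inequality $-\tfrac{d}{dt}\tilde E+c(\|\nabla\varphi\|^2+\|\nabla\xi\|^2+\|\xi\|^2)\le C(\|f_1\|^2+\|f_2\|^2)$. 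Integrating backward from any $t_0\in[T/4,T/2]$ to $\tau\in[0,t_0]$ yields $\tilde E(\tau)\le\tilde E(t_0)+C\iint_Q(|f_1|^2+|f_2|^2)\,dxdt$; averaging $t_0$ over $[T/4,T/2]$ and invoking step (i) bounds $\tilde E(\tau)$ by $C\int_{T/4}^{T/2}(\|\Delta\varphi\|^2+\epsilon\|\xi\|^2)\,dt$ plus the sources, which on $[T/4,T/2]$ (old weights non-degenerate) is controlled by the right-hand side of \eqref{Carleman-final}. Evaluating at $\tau=0$ produces the two initial-time terms, and integrating the dissipation inequality over $[0,T/4]$ produces the remaining $[0,T/2]$ integral terms.

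The main obstacle is step (iii): the coupling term $-M_1\Delta\varphi\,\xi$ becomes first order after integration by parts and is not sign-definite, so the multiplier $\kappa$ (equivalently, the weighting of the $\varphi$-equation) must be tuned to $M_1$ to absorb it into $\kappa\|\nabla\varphi\|^2$. One must also arrange that the backward energy argument only ever requires the solution on $[T/4,T/2]$, where the Carleman weights are non-degenerate; it is precisely the non-degeneracy of $l$ on $[0,T/2]$ that makes the passage from the $\alpha,\phi$ weights to the $\beta,\gamma$ weights possible.
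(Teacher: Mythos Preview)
Your approach is correct and is precisely the standard argument the paper alludes to (the paper gives no details beyond ``it combines energy estimates, together with the fact that $\beta\le\alpha$ in $Q$''); your steps (i)--(iii) flesh this out faithfully. One small correction in step (iii): the coupling $a\kappa\int_\Omega\xi(\varphi-(\varphi)_\Omega)$ cannot in general be fully absorbed into the dissipation for every $M_1$, so the clean inequality $-\tfrac{d}{dt}\tilde E+c(\dots)\le C(\|f_1\|^2+\|f_2\|^2)$ should instead read $-\tfrac{d}{dt}\tilde E+c(\dots)\le C\tilde E+C(\|f_1\|^2+\|f_2\|^2)$, after which backward Gronwall on $[0,T/2]$ gives $\tilde E(\tau)\le e^{CT}\bigl(\tilde E(t_0)+C\int\|f\|^2\bigr)$ with $C$ independent of $\epsilon$---the rest of your argument then goes through unchanged.
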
 
The proof of this lemma is standard. It combines energy estimates, together with the fact that $\beta \leq \alpha$ in $Q$.

Now, we proceed to the definition of the spaces where \eqref{system-linear-4}-\eqref{5.3} will be solved. The main space will be:

\begin{align*}
E = \biggl \{&  (u,v,g) \in E_0:  e^{-s\hat{\beta}}\hat{\gamma}^{-3/2} \bigl ( L(u,v) \bigl )_1 \in L^2(Q),  e^{-s\beta}\gamma^{-1}  \biggl(  \bigl ( L(u,v) \bigl)_2 -g\chi \biggl ) \in  L^2(0,T; H^1(\Omega)),    \\ 
& \int_\Omega \bigl(L(u,v)\bigl)_1 dx = 0  \ \text{and} \ \frac{\partial u}{\partial \nu} = \frac{\partial v}{\partial \nu} =0  \ \text{on} \ \Sigma \biggl \},
\end{align*}
where 
\begin{align*}
E_0 = \biggl\{ & (u,v,g):  e^{-s\beta^*} (\gamma^*)^{-5}u,  e^{-s\beta^*}(\gamma^*)^{-3/2} v,  \chi e^{-s\beta^*}(\gamma^*)^{-9}g  \in  L^2(Q), \\ 
& e^{s/2\beta^* - s\hat{\beta}} \hat{\gamma}^{13/8}u \in L^{2}(0,T; H^2(\Omega)) \cap  L^{\infty}(0,T; H^1(\Omega)), \  e^{-s/2\beta^*} \hat{\gamma}^{-25/8} v \in L^2(0,T; H^3(\Omega)) \nonumber \\
& \text{and} \  e^{-s/2\beta^*} \hat{\gamma}^{-25/8} g \in L^2(0,T; H^1(\Omega)) \biggl\}.
\end{align*}

Observe that $E$ is a Banach space for the norm:
\begin{align}
||(u,v,g)||_E = & \  \left \|e^{-s\beta^*} (\gamma^*)^{-5}u\right \|_{L^2(Q)} + \left \| e^{-s\beta^*}(\gamma^*)^{-3/2} v\right \|_{L^2(Q)} + \left \| \chi e^{-s\beta^*}(\gamma^*)^{-9}g\right \|_{L^2(Q)} \nonumber \\
& +  \left \| e^{-s\hat{\beta}}\hat{\gamma}^{-3/2} \bigl( L(u,v) \bigl)_1\right \|_{L^2(Q)} + \left\| e^{-s\beta}\gamma^{-1}  \biggl ( \bigl(L(u,v)\bigl)_2 -g\chi \biggl ) \right \|_{L^2(0,T;H^1(\Omega))} \nonumber \\  
 & +  \left \| e^{s/2\beta^* - s\hat{\beta}} \hat{\gamma}^{13/8}u\right \|_{L^{2}(0,T; H^2(\Omega)) \cap L^{\infty}(0,T; H^1(\Omega))}  +  \left \| e^{-s/2\beta^*} \hat{\gamma}^{-25/8} v\right \|_{L^2(0,T; H^3(\Omega))}  \nonumber \\
& +  \left \|e^{-s/2\beta^*} \hat{\gamma}^{-25/8} g \right \|_{L^2(0,T; H^1(\Omega))}.
\end{align}

%
%\begin{remark}
%For every  $(u,v,g)\in E_0$, we have that $\nabla \cdot (u \nabla v) \in L^2(e^{-2s\hat{\beta}}\hat{\gamma}^{-3}; Q)$. In fact, 
%\begin{align*}
% & \iint\limits_Q e^{-2s\hat{\beta}}\hat{\gamma}^{-3}|\nabla \cdot (u \nabla v)|^2dxdt  \leq   \iint\limits_Q e^{-2s\hat{\beta}}\hat{\gamma}^{-3}( | \nabla u|^2 |\nabla v|^2 + |u|^2 |\Delta v|^2)dxdt \\
%&  \leq   \iint\limits_Q  (|e^{s/2\beta^* - s\hat{\beta}} \hat{\gamma}^{13/8} \nabla u|^2 |e^{-s/2\beta^*} \hat{\gamma}^{-25/8}  \nabla v|^2 + |e^{s/2\beta^* - s\hat{\beta}} \hat{\gamma}^{13/8}u|^2 |e^{-s/2\beta^*} \hat{\gamma}^{-25/8}  \Delta v|^2)dxdt <\infty.
%\end{align*}
%\end{remark}

\begin{remark}
If $(u,v,g) \in E$, then $u(T)=v(T) =0$, so that $(u,v,g)$ solves a null controllability problem for system \eqref{system-linear} with an appropriate right-hand side $(h_1,h_2)$.
\end{remark}

We have the following result:
\begin{proposition}\label{UnifResult}
Let $0 < \epsilon \leq 1$ and  $(M_1, M_2) \in  \mathbb{R}^2$ be such that  $aM_1-bM_2=0$. Assume that:
\begin{align}
(u_0,v_0)\in H^1(\Omega)\times H^2(\Omega), \ \int_\Omega u_0dx=0, \ \frac{\partial v_0}{\partial \nu} =0 \ \text{on} \ \partial \Omega
\end{align}
and
\begin{align}
e^{-s\hat{\beta}}\hat{\gamma}^{-3/2}h_1 \in L^2(0,T; L^2_0(\Omega)),e^{-s\beta}\gamma^{-1} h_2 \in L^2(0,T; H^1(\Omega)) .
\end{align}
Then, there exists a control $g \in L^2( 0,T; H^1(\Omega))$, bounded independently of $\epsilon$, such that, if $(u,v)$ is the associated solution to \eqref{system-linear-4}, one has $(u,v, g) \in E$. In particular, \eqref{5.3} holds.
\end{proposition}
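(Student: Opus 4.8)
The plan is to solve \eqref{system-linear-4}--\eqref{5.3} by the variational duality method of Fursikov and Imanuvilov, using the refined Carleman inequality \eqref{C-8} as the observability estimate, with the weights chosen so that all bounds are uniform in $\epsilon$. Write $L^*$ for the adjoint operator, so that \eqref{system-linear-adjoint} reads $L^*(\varphi,\xi)=(f_1,f_2)$ with
$$
L^*(\varphi,\xi)=\bigl(-\varphi_t-\Delta\varphi-a\xi,\ -\epsilon\xi_t-\Delta\xi+b\xi+M_1\Delta\varphi\bigr).
$$
On the space $P_0$ of functions $(\varphi,\xi)\in C^2(\overline Q)^2$ satisfying the homogeneous Neumann conditions on $\Sigma$ and $\int_\Omega\varphi(\cdot,T)\,dx=0$, I would introduce the bilinear form
\begin{align*}
a\bigl((\bar\varphi,\bar\xi),(\varphi,\xi)\bigr)={}& \iint_Q e^{2s\beta^*}(\gamma^*)^{10}\bigl(L^*(\bar\varphi,\bar\xi)\bigr)_1\bigl(L^*(\varphi,\xi)\bigr)_1\,dx\,dt \\
& + \iint_Q e^{2s\beta^*}(\gamma^*)^{3}\bigl(L^*(\bar\varphi,\bar\xi)\bigr)_2\bigl(L^*(\varphi,\xi)\bigr)_2\,dx\,dt + \iint_Q e^{2s\beta^*}(\gamma^*)^{18}\chi^2\bar\xi\,\xi\,dx\,dt,
\end{align*}
whose weights are exactly the squares appearing on the right-hand side of \eqref{C-8}. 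If $a(w,w)=0$ then $L^*w=0$ and $\chi\xi=0$, so by \eqref{C-8} one gets $\xi\equiv0$, $\nabla\varphi\equiv0$, $\varphi\equiv(\varphi)_\Omega$, and then $(L^*w)_1=0$ together with $\int_\Omega\varphi(\cdot,T)=0$ forces $w=0$; hence $a(\cdot,\cdot)$ is a scalar product on $P_0$, and I denote by $P$ the completion of $P_0$ for the associated norm.

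Next I would introduce the linear form
$$
\ell(\varphi,\xi)=\iint_Q h_1\varphi\,dx\,dt+\iint_Q h_2\xi\,dx\,dt+\int_\Omega u_0\,\varphi(\cdot,0)\,dx+\epsilon\int_\Omega v_0\,\xi(\cdot,0)\,dx,
$$
and check its continuity for $a(\cdot,\cdot)^{1/2}$, uniformly in $\epsilon$. Since $\int_\Omega h_1=\int_\Omega u_0=0$, I may replace $\varphi$ by $\varphi-(\varphi)_\Omega$ in the first and third terms and bound them, via Cauchy--Schwarz and the hypotheses $e^{-s\hat\beta}\hat\gamma^{-3/2}h_1\in L^2(Q)$ and $u_0\in L^2(\Omega)$, by the left-hand terms $\iint e^{2s\hat\beta}\hat\gamma^3|\varphi-(\varphi)_\Omega|^2$ and $\|\varphi(\cdot,0)-(\varphi(\cdot,0))_\Omega\|_{L^2}^2$ of \eqref{C-8}. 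The second term is controlled using $e^{-s\beta}\gamma^{-1}h_2\in L^2(0,T;H^1(\Omega))$ against $\iint e^{2s\beta}\gamma^4|\xi|^2$, and the last one, using the term $\epsilon\|\xi(\cdot,0)\|^2_{L^2}$ from the left of \eqref{C-8}, is at most $\sqrt\epsilon\,\|v_0\|_{L^2}\,a(w,w)^{1/2}$, hence $\epsilon$-uniform because $\epsilon\le1$. The Lax--Milgram theorem (equivalently, Riesz representation) then yields a unique $\hat w=(\hat\varphi,\hat\xi)\in P$ with $a(\hat w,w)=\ell(w)$ for all $w\in P$, and $a(\hat w,\hat w)^{1/2}=\|\ell\|_{P'}$ is bounded by the data norms, uniformly in $\epsilon$.

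I would then set
$$
u=e^{2s\beta^*}(\gamma^*)^{10}\,(L^*\hat w)_1,\qquad v=e^{2s\beta^*}(\gamma^*)^{3}\,(L^*\hat w)_2,\qquad g=e^{2s\beta^*}(\gamma^*)^{18}\,\chi\,\hat\xi.
$$
Testing $a(\hat w,w)=\ell(w)$ against smooth $w$ and integrating by parts identifies $(u,v)$ as the solution of \eqref{system-linear-4} with right-hand side $(h_1,g\chi+h_2)$ and initial data $(u_0,v_0)$; the absence of any terminal contribution $\varphi(\cdot,T),\xi(\cdot,T)$ in $\ell$ encodes $u(T)=v(T)=0$, that is \eqref{5.3}. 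By the very choice of weights, $e^{-s\beta^*}(\gamma^*)^{-5}u=e^{s\beta^*}(\gamma^*)^5(L^*\hat w)_1$, $e^{-s\beta^*}(\gamma^*)^{-3/2}v=e^{s\beta^*}(\gamma^*)^{3/2}(L^*\hat w)_2$ and $\chi e^{-s\beta^*}(\gamma^*)^{-9}g=e^{s\beta^*}(\gamma^*)^{9}\chi^2\hat\xi$ all have $L^2(Q)$ norm bounded by $a(\hat w,\hat w)^{1/2}$, hence by the data, uniformly in $\epsilon$; this gives the first three conditions defining $E_0$. The two defining conditions of $E$ reduce, since $L(u,v)=(h_1,g\chi+h_2)$, to the standing hypotheses on $h_1$ and $h_2$, and the constraint $\int_\Omega(L(u,v))_1\,dx=0$ follows from $\int_\Omega h_1=0$.

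It remains to upgrade $(u,v,g)$ to the full space $E_0$, namely to prove the higher weighted estimates $e^{s\beta^*/2-s\hat\beta}\hat\gamma^{13/8}u\in L^2(0,T;H^2)\cap L^\infty(0,T;H^1)$, $e^{-s\beta^*/2}\hat\gamma^{-25/8}v\in L^2(0,T;H^3)$ and $e^{-s\beta^*/2}\hat\gamma^{-25/8}g\in L^2(0,T;H^1)$, all with $\epsilon$-independent constants; this is the main obstacle, and it is precisely here that the non-vanishing weights $\beta,\gamma$ built from $l(t)$ are needed. I would proceed by a weighted bootstrap: from the $L^2$ bound on $e^{-s\beta^*}(\gamma^*)^{-5}u$ and the second equation $-\Delta v+bv=au+g\chi+h_2-\epsilon v_t$, elliptic regularity in space gains two derivatives on $v$, the term $\epsilon v_t$ being absorbed through weighted energy estimates that exploit the good sign of $\epsilon\int v_t\,(\text{weight})\,v$ and the decay of the weights at $t=T$, so that the constants stay uniform in $\epsilon$; then maximal parabolic regularity for the genuinely parabolic first equation $u_t-\Delta u=-M_1\Delta v+h_1$ (applied to $u$ times the relevant weight) produces the $L^2(H^2)\cap L^\infty(H^1)$ estimate on $u$; finally the improved regularity of $u$ feeds back through the second equation into the $L^2(H^3)$ bound on $v$ and the $L^2(H^1)$ bound on $g$. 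The delicate points throughout are the commutation of the weights with $\partial_t$, $\Delta$ and $\nabla$, which generates only lower-order terms already controlled by \eqref{C-8} and by the previous step, and the uniformity in $\epsilon$, which is exactly what the $\epsilon$-dependent structure of \eqref{C-8} (through $I_1(\epsilon,s;\xi)$) was designed to guarantee.
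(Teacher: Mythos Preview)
Your proposal is correct and follows essentially the same approach as the paper: the variational Lax--Milgram construction on the completion $P$ of $P_0$, with the bilinear form and linear form you wrote, is exactly what the paper does (up to the sign of $g$, which in the paper is $\hat g=-e^{2s\beta^*}(\gamma^*)^{18}\chi\hat w$). For the regularity bootstrap the paper is more concrete than your sketch: it sets $(u^*,v^*)=\rho(t)(\hat u,\hat v)$ for two successive $x$-independent weights, first $\rho=e^{s\beta^*/2-s\hat\beta}\hat\gamma^{13/8}$ and then $\rho=e^{-s\beta^*/2}\hat\gamma^{-25/8}$, chosen so that $|\rho_t|$ is dominated by the weight from the previous step; this reduces each step to standard parabolic regularity for the system with an already-controlled forcing, and the uniform-in-$\epsilon$ $L^2(0,T;H^3)$ bound on $v$ comes from the simple energy identity of Remark~\ref{regulaelip}, while $e^{-s\beta^*/2}\hat\gamma^{-25/8}g\in L^2(0,T;H^1)$ follows directly from the $\nabla\xi$ term on the left of \eqref{C-8}.
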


\begin{proof}
In this proof, we follow the ideas of \cite{F-Im}. 

Let $L^*$ be the adjoint operator of $L$, i.e.,
$$
L^*(z,w) = (-z_{t}  - \Delta z - aw , -\epsilon w_t - \Delta w + bw  + M_1\Delta z)
$$
and  let us introduce the space
 $$
P_0 = \biggl \{ (z,w) \in C^{\infty}(\overline{Q}); \ \frac{\partial z}{\partial \nu} = \frac{\partial w}{\partial \nu}  = 0  \ \mbox{on} \ \Sigma , \int_\Omega z(x,T)dx = 0  \  \forall t  \in [0,T] \biggl \},
 $$
 Then, for $(\zeta, \rho), (z,w) \in P_0$, we define  
 \begin{align*}
 a\bigl((\zeta, \rho), (z,w)  \bigl) := &  \ \iint\limits_Q e^{2s\beta^*} (\gamma^*)^{10} \bigl(L^*(\zeta, \rho)\bigl)_1 \bigl( L^*(z,w) \bigl)_1 dxdt \\ 
 & + \iint\limits_Q e^{2s\beta^*}(\gamma^*)^{3} \bigl(L^*(\zeta, \rho)\bigl)_2\bigl( L^*(z,w) \bigl)_2dxdt  \\ 
 & + \iint\limits_Q|\chi|^2e^{2s\beta^*}(\gamma^*)^{18} \rho w dxdt.
 \end{align*}
 From the Carleman inequality \eqref{C-8} applied to functions of $P_0$, it follows that we have a  unique continuation  property for the system 
\begin{equation}\label{5.1-1}
\begin{cases}
L^*(z,w) = (0,0)&     \mbox{in}  \  \ Q, \\
\frac{\partial z}{\partial \nu} = \frac{\partial w}{\partial \nu} = 0     &    \mbox{on}  \  \   \Sigma,
\end{cases}
\end{equation}
which implies that $a(.,.)$ is a scalar product on $P_0$. 

Therefore, we can consider the space $P$,  the completion of $P_0$ with respect to the norm associated to $a(.,.)$ (which we denote by $||.||_P$). This is a Hilbert space  and $a(.,.)$ is a continuous and coercive bilinear form on $P$.

Let us also introduce $l$, given by
 \begin{align}\label{l}
 <l, (z,w)> = \iint\limits_Qh_1zdxdt  +  \iint\limits_Qh_2wdxdt  + \int_{\Omega} u_0(x)z(x,0)dx + \epsilon \int_{\Omega} v_0(x)w(x,0)dx, 
 \end{align}
 for all $(z,w) \in P$.
 
After a simple computation, and thanks to \eqref{C-8}, we see that 
\begin{align}\label{linearform}
|<l,(z,w)>| \leq & C\biggl(  \left\| e^{-s\hat{\beta}}\hat{\gamma}^{-3/2}h_1\right \| _{L^2(Q)} +  \left\| e^{-s\beta}\gamma^{-1}h_2\right \| _{L^2(0,T;H^1(\Omega))}   \nonumber \\
& +\left\| (u_0,\epsilon^{1/2} v_0)\right \|_{L^2(\Omega)^2} \biggl) \left\| (z,w) \right \|_P   \ \ \ \ \forall (z,w) \in P.
\end{align}
In other words, $l$ is a bounded linear form on $P$ and the constant $C$ in \eqref{linearform} does not depend on $\epsilon$. Consequently, in view of Lax-Milgram's lemma, there exists a unique $(\hat{z}, \hat{w}) \in P$ satisfying:
\begin{align}\label{sol-LM}
a((\hat{z}, \hat{w}), (z,w)) = \bigl<l,(z,w)\bigl>   \ \ \ \ \forall (z,w) \in P.
\end{align}

We set
\begin{equation}\label{dual-1}
(\hat{u}, \hat{v}) =( e^{2s\beta^*} (\gamma^*)^{10}\bigl( L^*(\hat{z}, \hat{w}) \bigl)_1,e^{2s\beta^*}(\gamma^*)^{3}\bigl(L^*(\hat{z}, \hat{w})\bigl)_2  \ \text{and} \ \hat{g} = -e^{2s\beta^*}(\gamma^*)^{18} \hat{w} \chi.
\end{equation}
We must see that $(\hat{u}, \hat{v})$ satisfies:
\be\label{3.35}
 \iint\limits_Q e^{-2s\beta^*} (\gamma^*)^{-10}  |\hat{u}|^2  +  \iint\limits_Q e^{-2s\beta^*}(\gamma^*)^{-3} |\hat{v}|^2 +  \iint\limits_Q  e^{-2s\beta^*}(\gamma^*)^{-18}|\chi|^2 |\hat{g}|^2 < \infty
\ee
and that it is a solution of the reaction-diffusion system \eqref{system-linear-4}.

The first property  follows from the fact that $(\hat{z}, \hat{w}) \in P$ and 
$$
 \iint\limits_Q  e^{-2s\beta^*} (\gamma^*)^{-10}|\hat{u}|^2  + \iint\limits_Qe^{-2s\beta^*}(\gamma^*)^{-3} |\hat{v}|^2 + \iint\limits_Q e^{-2s\beta^*}(\gamma^*)^{-18}|\chi|^2 |\hat{g}|^2 = a((\hat{z}, \hat{w}),(\hat{z}, \hat{w})).
$$
In particular, from this last identity we see that  $(\hat{u}, \hat{v})  \in L^2(Q)^2$ and $ \hat{g} \in L^2(Q)$ and, from \eqref{linearform} and \eqref{sol-LM},  follows that  $\hat{g}$ is bounded independently of $\epsilon$. 

Now we consider $(\tilde{u}, \tilde{v})$ the weak solution of 
\begin{equation}\label{neweq}
\begin{cases}
\tilde{u}_{t}  - \Delta \tilde{u}  = -M_1\Delta  \tilde{v}  +  h_1 &     \mbox{in}  \  \ Q,  \\
\epsilon \tilde{v}_t - \Delta \tilde{v} + bv= au  + \hat{g} \chi +h_2  &     \mbox{in}  \  \ Q, \\
\frac{\partial \tilde{u}}{\partial \nu} = \frac{\partial \tilde{v}}{\partial \nu} = 0     &    \mbox{on}  \  \   \Sigma, \\
\tilde{u}(x,0) = u_0;  \ \tilde{v}(x,0) = v_0   &    \mbox{in}    \  \  \Omega.
\end{cases}
\end{equation}

We have that  $(\tilde{u}, \tilde{v})$ is also the unique solution of \eqref{neweq} defined by transposition.  Of course, this means that  $(\tilde{u}, \tilde{v})$  is the unique function such that 
\begin{align}\label{deftrans}
 \iint\limits_Q (\tilde{u}, \tilde{v}) \cdot (F_1,F_2)dxdt & = \iint\limits_Q h_1\varphi dxdt + \iint\limits_Q h_2\xi dxdt  + \iint\limits_Q g\chi \xi dxdt \nonumber \\
 &+\int_{\Omega} u_0(x)\varphi(x,0)dx + \epsilon \int_{\Omega} v_0(x)\xi(x,0)dx,
\end{align}
for any $(F_1,F_2) \in L^2(Q)^2$,  where $(\varphi, \xi)$ is the solution of 
\begin{equation}\label{transpo}
\left |   
\begin{array}{ll}
-\varphi_{t}  - \Delta \varphi  = a \xi  +F_1 &     \mbox{in}  \  \ Q,  \\
-\epsilon \xi_t - \Delta \xi =  -b\xi -M_1\Delta \varphi +F_2  &     \mbox{in}  \  \ Q, \\
\frac{\partial \varphi}{\partial \nu} = \frac{\partial \xi}{\partial \nu} = 0     &    \mbox{on}  \  \   \Sigma, \\
\varphi(x,T) = 0 ;  \ \xi(x,T) = 0  &    \mbox{in}    \  \  \Omega.
\end{array}
\right. 
\end{equation}

From \eqref{sol-LM} and \eqref{dual-1}, we see that $ (\hat{u}, \hat{v})$ also satisfies \eqref{deftrans}. Consequently, $ (\hat{u}, \hat{v}) = (\tilde{u}, \tilde{v})$ and $ (\hat{u}, \hat{v})$ is the solution of \eqref{system-linear-4}.

Finally, we must see that $(\hat{u}, \hat{v}, \hat{g})$ belongs to $E$.  From \eqref{3.35}, it only remains to check that
\begin{align*}
e^{s/2\beta^* - s\hat{\beta}} \hat{\gamma}^{13/8} \hat{u}  \in L^{2}(0,T; H^2(\Omega)) \cap  L^{\infty}(0,T; H^1(\Omega))
\end{align*}
and
\begin{align*}
 e^{-s/2\beta^*} \hat{\gamma}^{-25/8} (  \hat{v}, \hat{g})  \in L^2(0,T; H^3(\Omega)) \times L^2(0,T; H^1(\Omega)).
\end{align*}

To this end, let us introduce the  pair  $(u^*,v^*) = \rho(t)(\hat{u},\hat{v})$, which satisfies:

\begin{equation}\label{weightsys}
\begin{cases}
u^*_{t}  - \Delta u^*  = -M_1\Delta  v^*  +  \rho h_1 +\rho_t \hat{u} &     \mbox{in}  \  \ Q,  \\
\epsilon v^*_t - \Delta v ^*+ bv^*= au^*  + \rho \hat{g} \chi + \rho h_2  +\epsilon \rho_t \hat{v} &     \mbox{in}  \  \ Q, \\
\frac{\partial u^*}{\partial \nu} = \frac{\partial v^*}{\partial \nu} = 0     &    \mbox{on}  \  \   \Sigma, \\
u^*(x,0) = \rho(0)u_0(x);  \ v^*(x,0) = \rho(0) v_0(x)   &    \mbox{in}    \  \  \Omega. 
\end{cases}
\end{equation}

We will consider then two cases:

\textit{Case 1}. $\rho = e^{s/2\beta^* - s\hat{\beta}} \hat{\gamma}^{13/8}$.

In this case, it is not difficult to show that 
\begin{align}
|\rho_{t}| \leq  Ce^{-s\beta^*} (\gamma^*)^{-3/2} 
\end{align}
and then we have that $\rho_t \hat{u}$  and $\rho_{t} \hat{v}$ belong to $L^2(Q)$. Therefore, from well-known regularity properties of  parabolic systems (see, for instance, \cite{L-S-U}), we have
\begin{equation}
\begin{cases}
e^{s/2\beta^* - s\hat{\beta}} \hat{\gamma}^{13/8} \hat{u} \in L^{2}(0,T;H^2(\Omega)) \cap  L^\infty(0,T; H^1(\Omega)), \\
e^{s/2\beta^* - s\hat{\beta}} \hat{\gamma}^{13/8} \hat{v} \in L^2(0,T;H^2(\Omega)).
\end{cases}
\end{equation}

\textit{Case 2}. $\rho =e^{-s/2\beta^*} \hat{\gamma}^{-25/8} $.

In this case, a simple calculation gives 
\begin{align}
|\rho_{t}|  \leq Ce^{s/2\beta^* - s\hat{\beta}} \hat{\gamma}^{13/8}.
\end{align}
 Using the regularity obtained in case $1$, we conclude that   $\rho_t \hat{u}$ and $\rho_t \hat{v}$ belongs to $L^2(0,T; H^1(\Omega))$.

Using the definition of $\hat{g}$ and \eqref{C-8}, we can also show that 
\begin{align}
 \iint\limits_Q |\nabla (e^{-s/2\beta^*} \hat{\gamma}^{-25/8} \hat{g})|^2 \leq Ca((\hat{z}, \hat{w}),(\hat{z}, \hat{w})),
\end{align}
where $C$ does not depend on $\epsilon$ and hence it follows that  $e^{-s/2\beta^*} \hat{\gamma}^{-25/8} \hat{g} \in L^2(0,T; H^1(\Omega))$ and is bounded independently of $\epsilon$. 

Therefore,  from the regularity theory  for parabolic systems and Remark \ref{regulaelip} below,   we deduce that  
\begin{equation}
\begin{cases}
e^{-s/2\beta^*} \hat{\gamma}^{-25/8}  u \in L^{\infty}(0,T;H^1(\Omega)) \cap L^2(0,T;H^2(\Omega)), \\
e^{-s/2\beta^*} \hat{\gamma}^{-25/8}  \hat{v} \in L^2(0,T;H^3(\Omega)). 
\end{cases}
\end{equation}
This finishes the proof of Proposition \ref{UnifResult}.
\end{proof}

\begin{remark}\label{regulaelip}
Given any  $\epsilon >0$,  any $f \in L^2(0,T; H^1(\Omega))$ and any $z_0 \in H^2(\Omega)$, with $\frac{\partial z_0}{\partial \nu} =0$,  the solution of 
\begin{equation}\label{sus-regu}
\left |   
\begin{array}{ll}
\epsilon z_t - \Delta z +z = f  &     \mbox{in}  \  \ Q, \\
\frac{\partial z}{\partial \nu} = 0     &    \mbox{on}  \  \   \Sigma, \\
z(x,0) = z_0 &    \mbox{in}    \  \  \Omega
\end{array}
\right. 
\end{equation}
satisfies
$$
\left\| z\right \|_{L^2(0,T; H^3(\Omega))} \leq C\bigl(\left\| f \right \|_{L^2(0,T; H^1(\Omega))}+ \left\|z_0\right  \|_{H^2(\Omega)}  \bigl), 
$$
where $C>0$ is independent of $\epsilon$. 

In fact, multiplying \eqref{sus-regu} by $\epsilon \Delta z_t$ and integrating over $\Omega$, we get 
$$
\epsilon^2 \int_{\Omega} |\nabla z_t|^2dx + \frac{\epsilon}{2}\frac{d}{dt}\int_{\Omega}|\Delta z|^2dx + \frac{\epsilon}{2}\frac{d}{dt}\int_{\Omega}|\nabla z|^2dx \leq \int_{\Omega} |\nabla f|^2 dx.
$$ 
This last inequality gives $\epsilon z_t \in L^2(0,T; H^1(\Omega))$. Using elliptic regularity for \eqref{sus-regu}, the result follows. 

\end{remark}

\section{Uniform exact controllability to the trajectory}\label{sec4}

In this section we give the proof of Theorem \ref{mainresultt} using similar arguments to those employed, for instance, in \cite{Ima-1}. We will see that the results obtained in the previous section allow us to locally invert the nonlinear system \eqref{system}. In fact, the regularity deduced for the solution of the linearized system \eqref{system-linear-4} will be sufficient to apply a suitable inverse function theorem (see Theorem \ref{teoremadecontrol}  below).

Thus, let us set $u = M_1 + z$ and $v = M_2 + w$ and let us use these equalities in \eqref{system}. We find:
\begin{equation}\label{IFS}
\left |   
\begin{array}{ll}
L(z,w) =  ( -\nabla \cdot (z\nabla w), g \chi) &     \mbox{in}  \  \ Q, \\
\frac{\partial z}{\partial \nu} = \frac{\partial w}{\partial \nu} = 0     &    \mbox{on}  \  \   \Sigma, \\
z(x,0) = u_0 - M_1;  \ w(x,0) = v_0 - M_2   &    \mbox{in}    \  \  \Omega.
\end{array}
\right. 
\end{equation}
where $L$ was introduced in \eqref{5.2}.

This way, we have reduced our problem to a local null controllability result for the solution  $(z,w)$ to the nonlinear problem \eqref{IFS}. We will use the following inverse mapping theorem (see \cite{Dontchev, Graves}):

%\begin{theorem} \label{teoremadecontrol} Let $E$ and $G$ be two Banach spaces and let  $ \mathcal{A} : E \rightarrow G $ satisfy $ \mathcal{A} \in C^1(E;G).$ Assume that  $ e_0 \in E$, $\mathcal{A}(e_0) = h_0 $ and $\mathcal{A}'(e_0):E \rightarrow G $ is surjective. Then, there exist  $ \delta, \eta >0$ such that, for every $ h \in G$ satisfying  $ || h - h_0||_G < \delta$, there exists $e \in E$ such that
%$$ \mathcal{A}(e) = h \ \text{and}  \  ||e -e_0||_E \leq \eta.$$
%\end{theorem}

\begin{theorem} \label{teoremadecontrol}
Let $E$ and $G$ be two Banach spaces and let  $ \mathcal{A} : E \rightarrow G $ be a continuous function from $E$ to $G$ defined in $B_{\eta}(0)$ for some $\eta >0$ with $\mathcal{A}(0) =0$. Let $\Lambda$ be a continuous and linear operator from $E$ onto $G$ and suppose there exists $C_0 >0$ such that 
\be\label{estimavt}
||e||_E \leq C_0||\Lambda (e)||_G
\ee
and that there exists $\delta < C_0^{-1}$ such that 
  \be\label{strictdiffer-1}
|| \mathcal{A} (e_1) - \mathcal{A} (e_2) -\Lambda(e_1-e_2)|| \leq \delta ||e_1 -e_2||
\ee
whenever $e_1,e_2 \in B_{\eta}(0)$. Then the equation $\mathcal{A}(e) = h $ has a solution $e \in B_{\eta}(0)$ whenever $||h||_G \leq c \eta$, where $c =  M^{-1} -\delta$.
\end{theorem}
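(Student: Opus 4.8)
The plan is to recast $\mathcal{A}(e)=h$ as a fixed-point problem and solve it with Banach's contraction principle on the closed ball $\overline{B_\eta(0)}$ of radius $\eta$. First I would introduce the ``remainder'' map $R:=\mathcal{A}-\Lambda$, so that $\mathcal{A}(e)=h$ is equivalent to $\Lambda e = h-R(e)$. Hypothesis \eqref{strictdiffer-1} says precisely that $\|R(e_1)-R(e_2)\|_G\le \delta\|e_1-e_2\|_E$ on $B_\eta(0)$, i.e. $R$ is $\delta$-Lipschitz there; taking $e_2=0$ and using $\mathcal{A}(0)=0$ (hence $R(0)=0$) yields the pointwise bound $\|R(e)\|_G\le \delta\|e\|_E$ for every $e\in B_\eta(0)$.

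Next I would invert $\Lambda$. The coercivity estimate \eqref{estimavt} forces $\Lambda$ to be injective (if $\Lambda e=0$ then $\|e\|_E\le C_0\cdot 0=0$), and since $\Lambda$ is assumed to map $E$ onto $G$, it is a continuous linear bijection; consequently $\Lambda^{-1}:G\to E$ exists, is linear, and satisfies $\|\Lambda^{-1}g\|_E\le C_0\|g\|_G$ for all $g\in G$, directly from \eqref{estimavt}. (Here the constant denoted $M$ in the statement is $C_0$, so that $c=C_0^{-1}-\delta>0$.) With this inverse at hand the equation becomes the fixed-point problem $e=T(e)$, where
\be
T(e):=\Lambda^{-1}\bigl(h-R(e)\bigr).
\ee

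It then remains to check that $T$ maps $\overline{B_\eta(0)}$ into itself and is a contraction there. For the self-map property, if $\|e\|_E\le\eta$ then $\|T(e)\|_E\le C_0\|h\|_G+C_0\|R(e)\|_G\le C_0\|h\|_G+C_0\delta\eta$, and the threshold $\|h\|_G\le c\eta=(C_0^{-1}-\delta)\eta$ is exactly what forces the right-hand side to be $\le\eta$; this is the one place where the precise form of $c$ is used, and it is the main (purely bookkeeping) point to get right. For the contraction property, $\|T(e_1)-T(e_2)\|_E\le C_0\|R(e_1)-R(e_2)\|_G\le C_0\delta\|e_1-e_2\|_E$ with $C_0\delta<1$ since $\delta<C_0^{-1}$. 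As $\overline{B_\eta(0)}$ is a closed subset of the Banach space $E$, hence a complete metric space, Banach's fixed-point theorem produces a unique $e^*\in\overline{B_\eta(0)}$ with $T(e^*)=e^*$, i.e. $\Lambda e^*+R(e^*)=h$, which is exactly $\mathcal{A}(e^*)=h$. Note that continuity of $\mathcal{A}$ is not needed beyond the Lipschitz bound \eqref{strictdiffer-1}. Finally, I would remark that injectivity of $\Lambda$ is inessential: if \eqref{estimavt} is read as the existence, for each $g\in G$, of a preimage $e$ with $\Lambda e=g$ and $\|e\|_E\le C_0\|g\|_G$ (a bounded right inverse), the identical argument goes through with $\Lambda^{-1}$ replaced by such a right inverse, which is the form relevant to the controllability application.
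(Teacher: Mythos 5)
Your proof is correct for the theorem as stated, and in fact the paper never proves this statement itself: it is quoted from \cite{Dontchev, Graves}, so the meaningful comparison is with the classical argument there. Graves's original proof does not assume injectivity of $\Lambda$; it only uses surjectivity, selects for each $g\in G$ some preimage of norm at most $C_0\|g\|_G$, and runs a successive-approximation scheme $e_{k+1}=e_k+d_k$ with $\Lambda d_k=h-\mathcal{A}(e_k)$, whose increments decay geometrically with ratio $C_0\delta<1$, convergence resting on completeness and continuity of $\mathcal{A}$. Under \eqref{estimavt}, however, $\Lambda$ is automatically injective, and your observation that the whole scheme collapses to a single Banach contraction $T(e)=\Lambda^{-1}\bigl(h-R(e)\bigr)$ is exactly right; this is the cleaner route, and you correctly identify $M=C_0$ (a typo in the statement) and why $c=C_0^{-1}-\delta$ is the exact threshold. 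Two small caveats. First, the hypotheses hold on $B_\eta(0)$ while Banach's theorem needs a complete set; if $B_\eta(0)$ is the open ball, work instead on the closed ball of radius $r:=C_0\|h\|_G/(1-C_0\delta)$, which satisfies $r\le\eta$ precisely when $\|h\|_G\le c\eta$, and on which your self-map estimate sharpens to $\|T(e)\|_E\le C_0\|h\|_G+C_0\delta r=r$; this places the fixed point in $\overline{B_r(0)}$, and for $\|h\|_G<c\eta$ strictly one gets $r<\eta$, matching the open-ball reading of the conclusion. Second, your closing remark that the identical contraction argument goes through with a mere bounded right inverse is too quick: a right inverse $S$ with $\|Sg\|_E\le C_0\|g\|_G$ need not be Lipschitz (nor linear), so $\|T(e_1)-T(e_2)\|_E$ is not controlled and one must fall back on the Graves iteration just described --- which is precisely where the continuity hypothesis on $\mathcal{A}$ (automatic in your setting, since \eqref{strictdiffer-1} makes $\mathcal{A}-\Lambda$, hence $\mathcal{A}$, Lipschitz on the ball) is actually used.
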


\begin{remark}
In the case where $ \mathcal{A} \in C^1(E;G)$,using the mean value theorem,  it can be shown, that for any $\delta < M^{-1}$, inequality \eqref{strictdiffer-1} is satisfied with $\Lambda = \mathcal{A}'(0)$ and $\eta >0$ the continuity constant at zero, i. e., 
\be\label{constantuniform1}
||\mathcal{A}'(e) -\mathcal{A}'(0)||_{\mathcal{L}(E;G)} \leq \delta
\ee
whenever $||e||_E \leq \eta$.
\end{remark}

%
%
%
%\begin{theorem} \label{teoremadecontrol} Let $E$ and $G$ be two Banach spaces and let  $ \mathcal{A} : E \rightarrow G $ be in  $ \mathcal{A} \in C^1(E;G).$ and exist $M> 0$ such that 
%$$
%||e||_E \leq M||\mathcal{A}'(e_0)e||_G.
%$$  
%Then, for any $\delta > 0$ such that $\delta < \frac{1}{M}$, if $ || h-h_0||_G\leq \delta$, there exists $e \in E$ such that
%$$ \mathcal{A}(e) = h \ \text{and}  \  ||e -e_0||_E \leq \eta.$$
%\end{theorem}
%
%
%
In our setting, we use this theorem with the space $E$ and 
$$
G = X  \times Y,
$$
where

\begin{align}\label{X}
X= \{ (h_1, h_2); & \   e^{-s\hat{\beta}}\hat{\gamma}^{-3/2}h_1 \in L^2(Q),  e^{-s\beta}\gamma^{-1}h_2 \in L^2(0,T;H^1(\Omega))  \nonumber \\
&  \text{and} \  \int_\Omega h_1(x,t) dx = 0 \ \text{a. e.}  \ t \in (0,T)  \},
\end{align}
\begin{align}
Y = \{ (z_0,w_0) \in H^1(\Omega) \times H^2(\Omega); \  \int_\Omega z_0dx=0 \  \text{and} \  \frac{\partial w_0}{\partial \nu} =0 \ \text{on} \ \partial \Omega \}
\end{align}

and, for each $0 < \epsilon \leq 1$,  the operator 
$$
\mathcal{A}(z,w,g) =(L(u,v) + ( ( \nabla \cdot (z\nabla w), -g \chi) ), z(.,0), w(.,0) ) \  \forall (z,w,g) \in E.
$$

We have 
$$
\mathcal{A}'(0,0,0) =(L(u,v) + ( 0, -g \chi) ), z(.,0), w(.,0)) \  \forall (z,w,g) \in E.
$$

In order to apply  Theorem \ref{teoremadecontrol}  to our problem, we must check that the previous framework fits the regularity required. This is done using the following proposition.

\begin{proposition}\label{AC1}
$\mathcal{A} \in C^1(E;G)$.
\end{proposition}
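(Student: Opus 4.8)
The plan is to establish that $\mathcal{A} \in C^1(E;G)$ by analyzing each component of $\mathcal{A}$ separately. The linear part, namely the map $(z,w,g) \mapsto (L(z,w) + (0,-g\chi), z(\cdot,0), w(\cdot,0))$, is manifestly continuous and linear from $E$ to $G$ (indeed, by the very definition of the norm on $E$, the pieces $e^{-s\hat\beta}\hat\gamma^{-3/2}(L(z,w))_1$ and $e^{-s\beta}\gamma^{-1}((L(z,w))_2 - g\chi)$ are controlled, and the trace estimates $z(\cdot,0) \in H^1(\Omega)$, $w(\cdot,0) \in H^2(\Omega)$ follow from the space $E_0$ and the inclusions $L^\infty(0,T;H^1) \hookrightarrow$ trace at $t=0$, etc.). Thus a bounded linear map is automatically $C^1$, and the whole question reduces to the smoothness of the nonlinear term $\mathcal{N}(z,w) := \nabla \cdot (z\nabla w)$ as a map from $E$ (or rather its first two components) into $X$.

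The main point, therefore, is to show that $\mathcal{N}: (z,w) \mapsto \nabla\cdot(z\nabla w)$ is a continuous bilinear map into $X$; since it is bilinear and bounded, it is automatically $C^1$ (with derivative $\mathcal{N}'(z,w)(\bar z,\bar w) = \nabla\cdot(\bar z\nabla w) + \nabla\cdot(z\nabla\bar w)$). First I would verify the zero-average condition $\int_\Omega \nabla\cdot(z\nabla w)\,dx = 0$, which holds by the divergence theorem together with the homogeneous Neumann condition $\partial w/\partial\nu = 0$ on $\Sigma$ built into $E$. Then the crux is the weighted bound
\begin{align}
\left\| e^{-s\hat\beta}\hat\gamma^{-3/2}\,\nabla\cdot(z\nabla w)\right\|_{L^2(Q)} \leq C\,\|(z,w,g)\|_E^2.
\end{align}
To get this I would expand $\nabla\cdot(z\nabla w) = \nabla z\cdot\nabla w + z\,\Delta w$ and estimate each term using the weighted regularity encoded in $E$: the component $e^{s/2\beta^* - s\hat\beta}\hat\gamma^{13/8}z$ lies in $L^2(0,T;H^2)\cap L^\infty(0,T;H^1)$, while $e^{-s/2\beta^*}\hat\gamma^{-25/8}w$ lies in $L^2(0,T;H^3)$. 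The key is a bookkeeping check that the product of the two exponential weights and powers of $\hat\gamma$ carried by $z$ and $w$ dominates the target weight $e^{-s\hat\beta}\hat\gamma^{-3/2}$, so that the weights combine favorably; the residual integrable factor together with Sobolev embeddings ($H^2(\Omega)\hookrightarrow L^\infty(\Omega)$ in dimension $N\le 3$, and $H^1\hookrightarrow L^6$) closes the estimate via Hölder in the spatial variable and Cauchy–Schwarz in time.

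The hardest part will be this weight-matching computation for the $X$-norm of the nonlinearity, and in particular checking the second $X$-requirement that $e^{-s\beta}\gamma^{-1}h_2 \in L^2(0,T;H^1(\Omega))$ is compatible with $h_2 = -g\chi$ being absorbed elsewhere — but since the nonlinearity only feeds the first component $h_1$, the second slot of $X$ receives nothing from $\mathcal{N}$, which simplifies matters. I would handle the products $\nabla z\cdot\nabla w$ and $z\,\Delta w$ by distributing derivatives onto the factor carrying the most regularity (for $z\,\Delta w$, place $z$ in $L^\infty_x$ via $H^2\hookrightarrow L^\infty$ and $\Delta w$ in $L^2_x$ coming from $w\in H^3$; for $\nabla z\cdot\nabla w$, use $\nabla z \in L^4_x$ or $L^\infty_x$ and $\nabla w\in L^4_x$ from the respective Sobolev regularity). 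Once the pointwise-in-time inequality $\|(\mathcal{N}(z,w))(t)\|_{L^2_x}\le C\,\omega(t)\,\|z(t)\|_{H^2}\|w(t)\|_{H^3}$ is in place with $\omega(t)$ the appropriate weight quotient, integrating in time and invoking $L^\infty_t$ regularity for $z$ gives the bilinear bound, completing the proof that $\mathcal{A}\in C^1(E;G)$.
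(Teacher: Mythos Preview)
Your proposal is correct and follows essentially the same approach as the paper: reduce to showing that the bilinear map $(z,w)\mapsto \nabla\cdot(z\nabla w)$ is continuous into $X$, and close the estimate via the exact weight factorization $e^{-s\hat\beta}\hat\gamma^{-3/2} = (e^{s/2\beta^*-s\hat\beta}\hat\gamma^{13/8})(e^{-s/2\beta^*}\hat\gamma^{-25/8})$ together with Sobolev product rules. The paper's version is slightly more compact---it bounds $\|e^{-s\hat\beta}\hat\gamma^{-3/2}z_1\nabla w_2\|_{L^2(0,T;H^1(\Omega))}$ directly via the product inequality $\|fg\|_{H^1}\le C\|f\|_{H^1}\|g\|_{H^2}$ (valid for $N\le 3$) rather than expanding the divergence into $\nabla z\cdot\nabla w + z\Delta w$---but the substance is identical.
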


\begin{proof}
All terms appearing in  $\mathcal{A}$ are linear (and consequently $C^1$), except for the term $\nabla \cdot (z\nabla w)$. However, the operator 
\be\label{bilinear-fe}
\bigl((z_1,w_1,g_1), (z_2,w_2,g_2) \bigl) \mapsto \nabla \cdot (z_1\nabla w_2)
\ee
is bilinear, so it suffices to prove its continuity from $E\times E$ to $X$.

In fact, we have
\begin{align}
\left\| \nabla \cdot (z_1\nabla w_2) \right \|_X & \leq  \overline{C}\left\|e^{-s\hat{\beta}}\hat{\gamma}^{-3/2} z_1\nabla w_2 \right \|_{L^2(0,T; H^1(\Omega))}  \\
& \leq \overline{C}\left\| e^{s/2\beta^* - s\hat{\beta}} \hat{\gamma}^{13/8} z_1\right \|_{L^{\infty}(0,T; H^1(\Omega))} \left\|  e^{-s/2\beta^*} \hat{\gamma}^{-25/8} w_2 \right \|_{L^2(0,T; H^3(\Omega))},\nonumber
\end{align}
for a positive constant $\overline{C}$ which does not depend on $\epsilon$.

Therefore, continuity of \eqref{bilinear-fe} is established and the  proof Proposition \ref{AC1} is finished.
\end{proof}

An application of  Theorem \ref{teoremadecontrol}  gives the existence of $\delta, \eta > 0$, which a priori depend on $\epsilon$,   such that  if $||(u_0 - M_1,v_0 - M_2)|| \leq \eta/(C_0^{-1}-\delta)$, then there exists a control $g = g(\epsilon)$ such that the associated solution $(z,w)$ to \eqref{IFS} verifies $z(T) = w(T) = 0$ and $||(z, w, g)||_E \leq \eta$.  To finish the proof of  Theorem \ref{mainresultt}, we must show that $C_0$, $\eta$ and $\delta$ does not depend on $\epsilon$. This is a direct consequence from the fact that the constant $C_0$ in \eqref{estimavt}  does not depend on $\epsilon$ (see Theorem  \ref{UnifResult}),  that we can take any  $\delta < C_0^{-1}$ and that $\eta$ can be chosen to be $\delta/\overline{C}$.

\section*{Acknowledgements}
The first named  author thanks the Laboratoire Jacques-Louis Lions for arranging his stay and the kind hospitality during the visit, which was useful for completing this paper. This work is partially supported by the ERC advanced grant 266907 (CPDENL) of the 7th Research Framework Programme (FP7). F. W. Chaves-Silva has been supported by the Grant  BFI-2011-424 of the Basque Government and partially supported by the Grant  MTM2011-29306-C02-00 of the MICINN, Spain, the ERC Advanced Grant FP7-246775 NUMERIWAVES, ESF Research Networking Programme OPTPDE and the Grant PI2010-04 of the Basque Government.

%%%%%%%%%%%%%%%%%%%%%%%%
%%%%%%%%%%%%%%%%%%%%%%%%
%%%%%%%%%%%%%%%%%%%%%%%%
%
%Appendix
%
%%%%%%%%%%%%%%%%%%%%%%%%
%%%%%%%%%%%%%%%%%%%%%%%%
%%%%%%%%%%%%%%%%%%%%%%%%

\appendix

\section{Some technical results}\label{ApA}
In this appendix we prove some technical results  used in the proof of Theorem \ref{Theo-1}.
\begin{lemma}\label{transpocart}
There exist $C = C(\Omega, \omega)$ and  $\lambda_0 = \lambda_0(\Omega, \omega)$ such that,  for every $\lambda \geq \lambda_0$, there exists $s_0 = s_0(\Omega, \omega, \lambda)$ such that, for any $s \geq s_0(T^4 + T^8)$, $g \in C^\infty_0(Q)$ and $\varphi_T \in C^{\infty}_0(\Omega)$, the solution $\varphi$ of 
\begin{equation}\label{transcar}
\left |   
\begin{array}{ll}
-\varphi_{t}  - \Delta \varphi  = \Delta g &     \mbox{in}  \  \ Q,  \\
\frac{\partial  \varphi}{\partial \nu} = 0     &    \mbox{on}  \  \   \Sigma, \\
\varphi(x,T) =  \varphi_T   &    \mbox{in}    \  \  \Omega, \\
\end{array}
\right. 
\end{equation}
satisfies
\begin{align}\label{car-transpo}
s^3 \iint\limits_{Q} e^{2s\alpha}\phi^3 | \varphi|^2 dxdt  \leq     C \left(  s^3 \iint\limits_{\omega \times (0,T)} e^{2s\alpha}\phi^3 | \varphi|^2 dxdt + s^4 \iint\limits_{Q} e^{2s\alpha}\phi^4 |g|^2 dxdt\right ).
\end{align}
\end{lemma}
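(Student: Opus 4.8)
The plan is to reduce the desired estimate for the adjoint-type equation \eqref{transcar} to the standard Carleman inequality for the heat equation with homogeneous Neumann boundary conditions (the one invoked in Lemma \ref{lemma-4}, deduced from \cite{F-Im}). The difficulty here is that the source term is $\Delta g$ rather than a plain $L^2$ function, so a direct application would force the right-hand side to control $s^\beta\iint e^{2s\alpha}\phi^\beta|\Delta g|^2$, which is two derivatives worse than the claimed $s^4\iint e^{2s\alpha}\phi^4|g|^2$. The whole point is to absorb these two derivatives by a duality/transposition argument, which is why $\varphi$ is to be understood as a solution defined by transposition.

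First I would set up the transposition framework: for a test source, $\varphi$ is characterized by a pairing identity against solutions of the \emph{forward} heat equation with Neumann conditions. Concretely, I would introduce, for arbitrary $G\in L^2(Q)$, the solution $w$ of the forward problem $\sigma w_t-\Delta w=G$ (with $\sigma=1$ here) with $\frac{\partial w}{\partial\nu}=0$ and suitable initial data, and write $\iint_Q \varphi\,G\,dxdt$ by integrating by parts twice against $\Delta g$. Because of the homogeneous Neumann condition $\frac{\partial w}{\partial\nu}=0$, the boundary terms from moving the two Laplacians off $g$ and onto $w$ vanish, leaving $\iint_Q \varphi G = \iint_Q g\,\Delta w\,dxdt$ (up to the initial/terminal contributions, which I would handle with the prescribed data $\varphi_T$). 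This is the step that trades the two derivatives on $g$ for two derivatives on $w$, and it is the crux of the argument.

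Next I would apply Lemma \ref{lemma-4} to $w$. Choosing the test source $G$ of the form $G=s^3e^{2s\alpha}\phi^3\varphi$ (so that the left-hand side of the target inequality appears), Lemma \ref{lemma-4} with the appropriate $\beta$ gives control of the full weighted $I_\beta(s,1;w)$ norm — in particular of the weighted second-order derivatives $s^{\beta-1}\iint e^{2s\alpha}\phi^{\beta-1}\sum_{i,j}|\partial^2_{x_ix_j}w|^2$ — by a weighted $L^2$ norm of $G$ plus a local term in $w$. The weighted $\Delta w$ term is exactly what multiplies $g$ in the transposition identity, so Cauchy–Schwarz with matching powers of $s$ and $\phi$ produces the $s^4\iint e^{2s\alpha}\phi^4|g|^2$ term on the right and leaves a factor of $I_\beta(s,1;w)^{1/2}$ that is reabsorbed after bounding $\|G\|$ in terms of the left-hand side itself. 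The bookkeeping of the $s$-powers (matching $\beta+3$, $\beta+1$, $\beta-1$ against the weights $\phi^3$, $\phi^4$) is routine but must be done carefully so that the exponents close up to give $\phi^3$ on the left and $\phi^4$ against $g$.

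The main obstacle I anticipate is controlling the local term produced by Lemma \ref{lemma-4}: that lemma returns a local integral of $w$ over $\omega'\times(0,T)$, whereas the target \eqref{car-transpo} must have its local term in $\varphi$ over $\omega\times(0,T)$. Converting a local observation of $w$ into a local observation of $\varphi$ is the delicate point, and I expect it to require either a further duality on a cutoff localized in $\omega$ (using $\omega'\subset\subset\omega$ to accommodate the loss of localization under the resolvent) or an independent local energy estimate relating $w$ and $\varphi$ on the observation region. Once that local term is expressed in terms of $\iint_{\omega\times(0,T)}e^{2s\alpha}\phi^3|\varphi|^2$, the estimate closes. I would finish by invoking density of $C^\infty_0(Q)$ and $C^\infty_0(\Omega)$ to remove the smoothness assumptions on $g$ and $\varphi_T$, exactly as in the main proof.
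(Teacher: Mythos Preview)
Your transposition setup is right, and choosing $G=s^3e^{2s\alpha}\phi^3\varphi$ so that the left-hand side of \eqref{car-transpo} appears is exactly what the paper does. But the step ``apply Lemma~\ref{lemma-4} to $w$'' does not close, and this is a genuine gap rather than a bookkeeping issue. After Cauchy--Schwarz on $\iint_Q g\,\Delta w$, the factor that must sit next to $s^4\iint_Q e^{2s\alpha}\phi^4|g|^2$ is $s^{-4}\iint_Q e^{-2s\alpha}\phi^{-4}|\Delta w|^2$, i.e.\ a bound on $\Delta w$ against the \emph{large} weight $e^{-2s\alpha}$. Lemma~\ref{lemma-4}, however, only controls $s^{\beta-1}\iint_Q e^{2s\alpha}\phi^{\beta-1}|\Delta w|^2$, which carries the \emph{small} weight $e^{2s\alpha}$. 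No choice of $\beta$ or of the Cauchy--Schwarz splitting reconciles the two exponentials, so the Carleman estimate for $w$ cannot supply the regularity you need. The paper never applies Lemma~\ref{lemma-4} to the forward solution; instead it proves directly (Claims~1 and~2) the weighted parabolic regularity bounds
\[
s^{-2}\iint_Q e^{-2s\alpha}\phi^{-2}|\nabla\hat z|^2\,dxdt
\quad\text{and}\quad
s^{-4}\iint_Q e^{-2s\alpha}\phi^{-4}|\Delta\hat z|^2\,dxdt
\]
by multiplying the equation for $\hat z$ by $s^{-2}e^{-2s\alpha}\phi^{-2}\hat z$ and by $-s^{-4}e^{-2s\alpha}\phi^{-4}\Delta\hat z$ and integrating by parts. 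These are energy estimates with the inverse Carleman weight, not Carleman estimates.

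The local-term obstacle you flag is real, and it is handled not by a further duality on a cutoff but by the way $\hat z$ is constructed. The paper does \emph{not} take $\hat z$ to be the plain forward solution with source $G$; it builds $(\hat z,\hat v)$ via Lax--Milgram in a Hilbert space whose norm is the Carleman observability functional, so that $\hat z$ solves $\hat z_t-\Delta\hat z = s^3e^{2s\alpha}\phi^3\varphi + \hat v\,1_\omega$ with a built-in local control $\hat v$ already supported in $\omega$ and satisfying $s^{-3}\iint_{\omega\times(0,T)} e^{-2s\alpha}\phi^{-3}|\hat v|^2 \le C\,s^3\iint_Q e^{2s\alpha}\phi^3|\varphi|^2$. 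The transposition identity then reads
\[
s^3\iint_Q e^{2s\alpha}\phi^3|\varphi|^2\,dxdt
= \iint_Q g\,\Delta\hat z\,dxdt - \iint_{\omega\times(0,T)} \varphi\,\hat v\,dxdt,
\]
and the local term is immediately in $\varphi$, not in an auxiliary function. This Lax--Milgram (optimal-control) construction is the missing idea in your outline; without it, converting a local observation of $w$ back into one of $\varphi$ is not just delicate but essentially circular.
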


%%%%%%%%%%%%%%%%%%%%%%%%%%%%%%%%%%%%%%%
%%%%%%%%%%%%%%%%%%%%%%%%%%%%%%%%%%%%%%%
%%%%%%%%%%%%%%%%%%%%%%%%%%%%%%%%%%%%%%%
%%%%%%%%%%%%%%%%%%%%%%%%%%%%%%%%%%%%%%%

\begin{proof}  The proof is inspired by the arguments in \cite{FC-G-Trans} (see also \cite{C-G-B-P-1, IY}).

We view $\varphi$ as a solution by transposition of  \eqref{transcar}. This means that $\varphi$ is the unique function in $L^2(Q)$ satisfying
\begin{equation}\label{transp}
 \iint\limits_{Q} \varphi h dx dt= \iint\limits_{Q} g \Delta z dxdt +\int_{\Omega} \varphi_T(x) z(x,T) dx \  \forall h\in L^2(Q),
\end{equation}
where we have denoted by $z$ the solution of the following problem:
   $$
   \left| 
    \begin{array}{ll} z_t-\Delta z=h&\mbox{ in }Q,
   \\
   \noalign{\smallskip} \displaystyle \frac{\partial z}{\partial n}=0&\mbox{ on
   }\Sigma,
   \\
   \noalign{\smallskip} z(x,0)=0&\mbox{ in }\Omega.
   \end{array}\right.
   $$
   
Let us introduce the space
   $$
X_0=\left \{ z \in C^2(\overline Q): \frac{\partial z}{\partial n}=0 \ \hbox{on} \ \Sigma \right \},
   $$
the operators $\mathcal{ L}=\partial_{t}-\Delta$,  $\mathcal{ L}^*=-\partial_t-\Delta$ and the norm $\left \|\cdot \right \|_X$, with
   $$
\left \|y\right  \|_X^2= \iint\limits_Q e^{2s\alpha}|\mathcal{L}^*y|^2 dxdt+s^3 \iint\limits_{\omega \times (0,T)}e^{2s\alpha}  \phi^3|y|^2dxdt
   $$
for all $y\in X_0$.
 
   Due to lemma \ref{lemma-4}, $\left \| \cdot \right \|_X$ is indeed a norm in $X_0$.    Let $X$ be the completion of $X_0$ for the norm $||\cdot||_X$.    Then $X$ is a Hilbert space for the scalar product
$(\cdot,\cdot)_X$, with
   $$
(w,y)_X= \iint\limits_Qe^{2s\alpha}(\mathcal{L}^*w)(\mathcal{L}^*y)dxdt+s^3\iint\limits_{\omega \times (0,T)} e^{2s\alpha} \phi^3wydxdt.
   $$
Let us also consider 
   $$
l(w)=s^3 \iint\limits_Qe^{2s\alpha} \phi^3 \varphi wdxdt \ \  \forall w\in X.
   $$
 
 By virtue of lemma \ref{lemma-4}, we have that $ l  \in X'$. Consequently, from the   Lax-Milgram's lemma,  there exists a unique  $y \in X$ such that 
 $$
 (y,w)_X = l (w), \ \forall w \in X.
 $$

Now, let us set 
   \begin{equation}\label{optimaleq-z}
   \hat{v}=-s^3 e^{2s\alpha}  \phi^3 y1_{\omega} \ \mbox{and} \ \hat{z}=e^{2s\alpha}\mathcal{L}^*y.
   \end{equation}
It is not difficult to see that $\hat{z}$ is, together with $\hat{v}$, a solution to the null controllability problem
\begin{equation}\label{optimaleq}
\begin{cases}
\hat{z}_t  - \Delta \hat{z}  =  s^3e^{2s\alpha} \phi^3\varphi+\widehat v1_{\omega}  &     \mbox{in}  \  \ Q,  \\
\frac{\partial \hat{z}}{\partial \nu } = 0    &    \mbox{on}  \  \   \Sigma, \\
\hat{z}(0) = \hat{z}(T) = 0  &    \mbox{in}    \  \  \Omega .
\end{cases}
\end{equation}
 We have 
\begin{align}\label{ineq1-z}
\qquad\displaystyle \left \|y  \right  \|^2_{X} &= \iint\limits_Q e^{-2s\alpha}|\widehat z|^2dxdt+s^{-3}\iint\limits_{\omega \times (0,T)}e^{-2s\alpha} \phi^{-3}|\widehat v|^2dxdt  \leq C s^3 \iint\limits_Qe^{2s\alpha} \phi^3|\varphi|^2 dxdt,
\end{align}
for $\lambda\geq \lambda_0$ and  $ s \geq  s_0(T^4+T^8)$,
since
$$
\left \| l \right \|_{X'}\leq Cs^{3/2} \bigl( \iint\limits_Qe^{2s\alpha} \phi^3 |\varphi|^2 dxdt\bigl)^{1/2}
$$
for this choice of the parameters $s$ and $\lambda$. 
 
 From \eqref{transp} and \eqref{optimaleq}, it follows that 
   \begin{equation}\label{hattransp}
   \begin{array}{c} \displaystyle
   s^3 \iint\limits_Qe^{2s\alpha} \phi^3|\varphi|^2dxdt = \iint\limits_Q g \Delta \widehat zdxdt -\iint\limits_{\omega \times (0,T)}\varphi \widehat v dxdt.
   \end{array}
   \end{equation}
From \eqref{hattransp}, we see that  the proof of   \eqref{car-transpo} is completed if we  bound $\Delta \widehat z$ in $Q$  in terms of the left-hand side of $(\ref{hattransp})$. In order to do that, we need the following estimate.
 %%%%%%%
 
 {\sc Claim 1.}  
 For $\lambda\geq \lambda_0$ and $ s\geq  s_0(T^4+T^8)$, the following estimate holds
 \begin{align}\label{IYineq}
 s^{-2} \iint\limits_{Q}   e^{-2s\alpha}\phi^{-2}|\nabla\widehat z|^2 dxdt &+ \iint\limits_{Q}  e^{-2s\alpha}|\widehat z|^2dxdt    +s^{-3}\iint\limits_{\omega \times (0,T)}\phi^{-3}e^{-2s\alpha}|\widehat v|^2 dxdt \nonumber \\
 &  \leq Cs^3 \iint\limits_{Q} e^{2s\alpha} \phi^3|\varphi|^2dxdt.
   \end{align}
\textit{Proof of Claim 1.}  \  In order to get an estimate of $|\nabla\widehat z|^2$, we multiply \eqref{optimaleq} by $s^{-2}e^{-2s\alpha}\phi^{-2}\,\widehat z$.  Integration by parts with respect to $x$  gives 
   \begin{align}\label{z-u-eq}
& s^{-2} \iint\limits_{Q} e^{-2s\alpha} \phi^{-2}\widehat z {\widehat z}_t dxdt+  s^{-2} \iint\limits_{Q} e^{-2s\alpha} \phi ^{-2} |\nabla\widehat z|^2 dxdt  \nonumber \\
   &-2s^{-1} \lambda \iint\limits_{Q}e^{-2s\alpha} \phi^{-1}   \nabla\eta^0\cdot\nabla\widehat z \widehat z dxdt -2s^{-2}\lambda  \iint\limits_{Q}e^{-2s\alpha} \phi^{-2}    \nabla\eta^0\cdot\nabla\widehat z \widehat z dxdt \nonumber  \\
   & = s  \iint\limits_{Q}  \phi  \varphi \widehat z   +s^{-2} \iint\limits_{\omega \times (0,T)}e^{-2s\alpha}\phi^{-2} \widehat v \widehat z dxdt.
   \end{align}
   Now we  integrate by parts with respect to the time variable in the first term.
   We obtain the following:
   \begin{align}\label{chepas}
 s^{-2}  \iint\limits_{Q} e^{-2s\alpha} \phi^{-2} \widehat z {\widehat z}_t dxdt &= -\frac{1}{2}s^{-2}\iint\limits_{Q}  (e^{-2s\alpha} \phi^{-2})_t |\widehat z|^2 dxdt   \leq C \iint\limits_{Q}   e^{-2s\alpha} |\widehat z|^2 dxdt,
   \end{align}
since 
$$
| (e^{-2s\alpha} \phi^{-2})_t| \leq C s\phi^{-3/4}e^{-2s\alpha} \ \text{for} \ \lambda \geq 1.
$$

   Finally, using Young's inequality for the other terms of (\ref{z-u-eq}),  we obtain
   \begin{align}\label{chepas-2}
   -2s^{-1}\lambda & \iint\limits_{Q}e^{-2s\alpha} \phi^{-1}   \nabla\eta^0\cdot\nabla\widehat z \widehat z dxdt -2s^{-2}\lambda\iint\limits_{Q}e^{-2s\alpha} \phi^{-2}  \nabla\eta^0\cdot\nabla\widehat z \widehat z dxdt
  \nonumber  \\
   &\leq C \iint\limits_{Q}  e^{-2s\alpha}|\widehat z|^2dxdt+ \frac{1}{2}s^{-2}  \iint\limits_{Q}  e^{-2s\alpha} \phi^{-2} |\nabla\widehat z|^2 dxdt,
   \end{align}

   \begin{align}\label{chepas-3}
s  \iint\limits_{Q}  \phi \varphi \widehat z dxdt \leq C\left( \iint\limits_{Q}  e^{-2s\alpha} |\widehat z|^2 dxdt+s^3 \iint\limits_{Q}e^{2s\alpha}\phi^3|\varphi|^2dxdt \right)
   \end{align}
and
   \begin{equation}\label{chepas-4}
 s^{-2}\lambda^{-2}\iint\limits_{\omega \times (0,T)}e^{-2s\alpha}\phi^{-2} \widehat v \widehat zdxdt \leq C\left( \iint\limits_{Q}  e^{-2s\alpha} |\widehat z|^2 dxdt+s^{-3}\lambda^{-4}\iint\limits_{\omega \times (0,T)}e^{-2s\alpha} \phi^{-3} |\widehat v|^2 dxdt\right),
   \end{equation}
since $s^{-1}\phi^{-1} \leq C$.

From \eqref{z-u-eq}, \eqref{chepas}-\eqref{chepas-4} and \eqref{ineq1-z}, Claim $1$ is proved.

 {\sc Claim 2.}  For $\lambda\geq \lambda_0$ and  $s\geq s_0(T^4+T^8)$, the following estimate holds
  \begin{align}\label{IYineq2}
   & s^{-4}  \iint\limits_{Q}  e^{-2s\alpha}\phi^{-4}(|\widehat   z_t|^2+|\Delta\widehat z|^2) dxdt + s^{-2}  \iint\limits_{Q}  e^{-2s\alpha}\phi^{-2}|\nabla\widehat z|^2 dxdt \nonumber \\ 
 & + \iint\limits_{Q}  e^{-2s\alpha}|\widehat z|^2 dxdt + s^{-3}\iint\limits_{\omega \times (0,T)}e^{-2s\alpha}\phi^{-3}|\widehat v|^2 dxdt  \leq Cs^3 \iint\limits_{Q}   e^{2s\alpha}\phi^3|\varphi|^2 dxdt.
   \end{align}
\textit{ Proof of Claim 2.}   \ We multiply \eqref{optimaleq}  by the function $-s^{-4}e^{-2s\alpha}\phi^{-4}\Delta \widehat z$ and integrate over $Q$.
   We obtain the following:
   \begin{equation}\label{zeq}
   \begin{array}{l}
   \displaystyle s^{-4}  \iint\limits_{Q}  e^{-2s\alpha}\phi^{-4}| \Delta \widehat z|^2 dxdt=s^{-4} \iint\limits_{Q}  e^{-2s\alpha}\phi^{-4}\widehat z_t \Delta\widehat z dxdt
   \\ \noalign{\medskip}\displaystyle
   \hskip0.5cm - s^{-1} \iint\limits_{Q}  \phi^{-1}\Delta \widehat z \varphi dxdt-s^{-4} \iint\limits_{\omega \times (0,T)}e^{-2s\alpha}\phi^{-4} \Delta \widehat z \widehat v dxdt.
   \end{array}
   \end{equation}

   The last two terms in the right hand side can be estimated as
follows:
   \begin{align}\label{esti1}
   s^{-1} \iint\limits_{Q}  \phi^{-1} \Delta \widehat z \varphi dxdt  \leq & \ \frac{1}{4}s^{-4}\iint\limits_{Q}  e^{-2s\alpha}\phi^{-4}|\Delta \widehat   z|^2dxdt +Cs^2 \iint\limits_{Q}   e^{2s\alpha}\phi^2|\varphi|^2dxdt
   \end{align}
and
   \begin{align}\label{esti2}
   s^{-4} \iint\limits_{\omega \times (0,T)} e^{-2s\alpha}\phi^{-4}\Delta \widehat z \widehat v dxdt    \leq & \  \frac{1}{4}s^{-4}  \iint\limits_{Q}   e^{-2s\alpha}\phi^{-4}|\Delta \widehat z|^2dxdt +C s^{-4}\iint\limits_{\omega \times (0,T)} e^{-2s\alpha}\phi^{-4}|\widehat v|^2 dxdt.
   \end{align}

   The last integrals in the inequalities
(\ref{esti1})--(\ref{esti2}) can be easily bounded using (\ref{IYineq}), provided we take $s\geq C T^8$.   Hence
   \begin{align}\label{16p}
  s^{-4}&  \iint\limits_{Q}  e^{-2s\alpha}\phi^{-4}|\widehat z_t|^2 dx dt + s^{-4}\iint\limits_{Q}  e^{-2s\alpha}\phi^{-4}| \Delta \widehat z|^2 dxdt \nonumber \\
   \leq & \ C \bigl(s^{-3} \iint\limits_{\omega \times (0,T)}e^{-2s\alpha}|\widehat v|^2 dx dt + s^{3} \iint\limits_{Q}  e^{2s\alpha}\phi^{3}|\varphi|^2 dx dt\bigl) \nonumber \\
  & + s^{-4} \iint\limits_{Q}  e^{-2s\alpha}\phi^{-4}\widehat z_t \Delta\widehat z dx dt.
   \end{align}

   Let us now deal with the last term in the right hand side of
(\ref{16p}).
   We integrate by parts with respect to $x$ and we get
   \begin{align}\label{eq2}
   s^{-4}  \iint\limits_{Q}  e^{-2s\alpha}\phi^{-4}\widehat z_t \Delta\widehat   z dxdt &=-\frac{1}{2}s^{-4} \iint\limits_{Q}     e^{-2s\alpha}\phi^{-4}\frac{\partial}{\partial t}|\nabla\widehat  z|^2 dxdt \nonumber \\
&-s^{-4} \iint\limits_{Q}    \nabla(e^{-2s\alpha}\phi^{-4})\cdot\nabla\widehat z \widehat   z_t dxdt.
   \end{align}
We  integrate by parts with respect to $t$ in the first term of the right hand side of~(\ref{eq2}). This yields:
   $$
\begin{array}{l}\displaystyle
-\frac{1}{2}s^{-4}  \iint\limits_{Q}  e^{-2s\alpha}\phi^{-4}\frac{\partial}{\partial t}|\nabla\widehat z|^2 dxdt=\frac{1}{2}s^{-4} \iint\limits_{Q}  (e^{ - 2s\alpha}\phi^{-4})_t|\nabla\widehat z|^2 dxdt.
\end{array}
   $$
     Therefore,
   \begin{equation}\label{nablazt}
   -\frac{1}{2}s^{-4} \iint\limits_{Q}  e^{-2s\alpha}\phi^{-4}\frac{\partial}{\partial t}|\nabla\widehat   z|^2 dxdt\leq Cs^{-2}\iint\limits_{Q}   e^{-2s\alpha}\phi^{-2}|\nabla\widehat z|^2 dxdt,
   \end{equation}
since 
  $$
|(e^{-2s\alpha}\phi^{-4})_t|\leq Cs^{5/4}e^{-2s\alpha}\phi^{-15/4} \ \text{if} \ s\geq CT^8.
   $$

   In order to estimate the second term in (\ref{eq2}), we take into account that
   $$
|\nabla(e^{-2s\alpha}\phi^{-4})|\leq C s  e^{-2s\alpha} \phi^{-3}
   $$
and  use Young's inequality to  obtain
   \begin{align}\label{chepas2}
s^{-4}  \iint\limits_{Q}  (\nabla(e^{-2s\alpha}\phi^{-4})\cdot\nabla\widehat z)\widehat z_t dxdt & \leq \frac{1}{4}s^{-4} \iint\limits_{Q}  e^{-2s\alpha}\phi^{-4}|\widehat z_t|^2 dxdt \nonumber \\
&+Cs^{-2} \iint\limits_{Q}  e^{-2s\alpha}\phi^{-2}|\nabla\widehat z|^2 dxdt.
   \end{align}
   From (\ref{16p})--(\ref{chepas2}), Claim $2$ is proved.

 Let us now finish the proof of Lemma \ref{transpocart}. From identity  \eqref{hattransp}, we have 
 \begin{align}\label{hattransp-1}
   s^3\lambda^4 \iint\limits_Qe^{2s\alpha} \phi^3|\varphi|^2dxdt &\leq C\biggl( s^3 \iint\limits_{\omega \times (0,T)} e^{2s\alpha}\phi^3 | \varphi|^2 dxdt + s^4 \iint\limits_{Q} e^{2s\alpha}\phi^4 |g|^2 dxdt\biggl) \\
   &+ \delta \biggl( s^{-4} \iint\limits_{Q}  e^{-2s\alpha}\phi^{-4}| \Delta \widehat z|^2 dxdt  + s^{-3}\iint\limits_{\omega \times (0,T)}e^{-2s\alpha}\phi^{-3}|\widehat v|^2 dxdt\biggl),  \nonumber
   \end{align}
for any $\delta >0$. 

Finally, from Claim $2$, the proof of Lemma  \ref{transpocart} is finished.

\end{proof}
%%%%%%%%%%%%%%%%%%%%%%%
%%%%%
%%%%%%%%%%%%%%%%%%%%%%%
%%%%%%%%%%%%%%%%%%%%%%%
%%%%%%%%%%%%%%%%%%%%%%%
%%%%%%%%%%%%%%%%%%%%%%%
%%%%%%%%%%%%%%%%%%%%%%%
%\end{document}
%%%%%%%%%%%%%%%%%%%%%%%
%%%%%
%%%%%%%%%%%%%%%%%%%%%%%
%%%%%%%%%%%%%%%%%%%%%%%
%%%%%%%%%%%%%%%%%%%%%%%
%%%%%%%%%%%%%%%%%%%%%%%
%%%%%%%%%%%%%%%%%%%%%%%

Now we prove lemma \ref{estimate-1},  used in the proof of Theorem \ref{Theo-1}.

\begin{proof}[Proof of Lemma \ref{estimate-1}]
Given $h \in L^2(Q)$, let $z$ be the unique solution of 
\begin{equation}\label{z}
\left |   
\begin{array}{ll}
z_{t}  - \Delta z = h&     \mbox{in}  \  \ Q,  \\
z = 0     &    \mbox{on}  \  \   \Sigma, \\
z (x,0) = 0,   &    \mbox{in}    \  \  \Omega. \\
\end{array}
\right. 
\end{equation}
By standard energy estimates, we have 
$$
\left \|z_t\right \|_{L^2(Q)}^2 +\left \| \Delta z\right  \|_{L^2(Q)}^2 + \left \| \nabla z\right  \|^2_{L^{\infty}(0,T; L^2(\Omega))}\leq C\left \| h \right \|_{L^2(Q)}^2.
$$
The duality between  \eqref{eta} and \eqref{z} gives 
\be \label{E-0}
 \iint\limits_{Q} \eta hdxdt = \iint\limits_{Q} (a \theta \Delta \xi   + \theta \Delta f_1)z dxdt.
\ee
Integrating by parts, we have 
\begin{align}\label{E-1}
 \iint\limits_{Q}  \theta \Delta \xi  z dxdt = \iint\limits_{Q}  \Delta \theta  \xi  z dxdt + 2 \iint\limits_{Q}  \nabla \theta \cdot \nabla  z \xi dxdt + \iint\limits_{Q}  \theta  \xi  \Delta z dxdt
\end{align}
and
\begin{align}\label{E-2}
 \iint\limits_{Q}  \theta \Delta f_1  z dxdt  = \iint\limits_{Q}  \Delta \theta  f_1  z dxdt + 2 \iint\limits_{Q}  \nabla \theta \cdot \nabla  z f_1 dxdt + \iint\limits_{Q}  \theta  f_1  \Delta z dxdt.
\end{align}
%A simple calculation gives 
%$$
%\Delta \theta = \Delta (s^3 \rho \phi^3e^{s\alpha}) = s^3 \phi 3 e^{s\alpha}\Delta \rho + 2s^3 \nabla \rho \cdot \nabla (\phi^3e^{s\alpha})+ s^3 \rho \Delta(\phi^3e^{s\alpha})
%$$
The result follows  from  \eqref{E-0} with $h = \eta$ and the fact that 
\be\label{weightestimate-s}
|\Delta \theta| \leq Cs^5\phi^5e^{s\alpha} \ \text{and} \ |\nabla \theta| \leq Cs^4\phi^4e^{s\alpha} \ \ \text{in} \ Q. 
\ee

\end{proof}

\end{document}